\title{An Aldous-Hoover type representation for row exchangeable arrays}
\author{
  Evan Donald\\
  University of Central Florida\\
  \href{mailto:ev446807@ucf.edu}{ev446807@ucf.edu}
  \and
  Jason Swanson\\
  University of Central Florida\\
  \href{mailto:jason.swanson@ucf.edu}{jason.swanson@ucf.edu}
}
\date{}
\newtheorem{thm}{Theorem}[section]
\newtheorem{cor}[thm]{Corollary}
\newtheorem{prop}[thm]{Proposition}
\newtheorem{lemma}[thm]{Lemma}
\numberwithin{equation}{section}
\providecommand{\flr}[1]{\left\lfloor{#1}\right\rfloor}
\def\ol{\overline}
\def\wt{\widetilde}
\def\al{\alpha}
\def\be{\beta}
\def\ga{\gamma}
\def\de{\delta}
\def\ze{\zeta}
\def\la{\lambda}
\def\vpi{\varpi}
\def\si{\sigma}
\def\ph{\varphi}
\def\om{\omega}
\def\Om{\Omega}
\def\bfd{{\bf d}}
\def\bmmu{\bm{\mu}}
\def\bmpi{\bm{\pi}}
\def\bmX{{\bm X}}
\def\bN{\mathbb{N}}
\def\bR{\mathbb{R}}
\def\bS{\mathbb{S}}
\def\cB{\mathcal{B}}
\def\cC{\mathcal{C}}
\def\cF{\mathcal{F}}
\def\cG{\mathcal{G}}
\def\cL{\mathcal{L}}
\def\cM{\mathcal{M}}
\def\cR{\mathcal{R}}
\def\cS{\mathcal{S}}
\def\cT{\mathcal{T}}
\def\cU{\mathcal{U}}
\begin{document}

\maketitle

\begin{abstract}

In an array of random variables, each row can be regarded as a single,
sequence-valued random variable. In this way, the array is seen as a sequence of
sequences. Such an array is said to be row exchangeable if each row is an
exchangeable sequence, and the entire array, viewed as a sequence of sequences,
is exchangeable. We give a representation theorem, analogous to those of Aldous
and Hoover, which characterizes row exchangeable arrays. We then use this
representation theorem to address the problem of performing Bayesian inference
on row exchangeable arrays.

\bigskip

\noindent{\bf AMS subject classifications:} Primary 60G09;
secondary 62E10, 60G25, 62M20

\noindent{\bf Keywords and phrases:} exchangeability, random arrays,
representation theorems, Bayesian inference

\end{abstract}

% \tableofcontents

\section{Introduction}

Consider a situation in which there are several agents, all from the same
population. Each agent undertakes a sequence of actions. These actions are
chosen according to the agent's particular tendencies. Although different agents
have different tendencies, there may be patterns in the population. We observe a
certain set of agents over a certain amount of time. Based on these
observations, we want to make probabilistic forecasts about the future behavior
of the agents.

We model this situation with an array of random variables. Let $S$ be a complete
and separable metric space and $\xi = \{\xi_{ij}: i, j \in \bN\}$ an array of
$S$-valued random variables. The space $S$ represents the set of possible
actions that the agents may undertake, and $\xi_{ij}$ represents the $j$th
action of the $i$th agent.

For a simple example, imagine a pressed penny machine, like those found in
museums or tourist attractions. For a fee, the machine presses a penny into a
commemorative souvenir. Now imagine the machine is broken, so that it mangles
all the pennies we feed it. Each pressed penny it creates is mangled in its own
way. Each has its own probability of landing on heads when flipped. The machine,
though, might have its own tendencies. For instance, it might tend to produce
pennies that are biased toward heads. In this situation, the agents are the
pennies and the actions are the heads and tails that they produce. We therefore
take $S = \{0, 1\}$ and $\xi_{ij} \in S$ would denote the result of the $j$th
flip of the $i$th penny created by the machine.

For an example with an infinite $S$, imagine the machine produces a
commemorative globe that fits in the palm of our hand. When we buy such a globe,
we might roll it like we would a die. When it comes to rest, whatever latitude
and longitude are at the top, we regard as the outcome of the roll. Ordinarily,
the machine produces globes that are uniformly weighted, so that every point on
the globe is equally likely to be the outcome of the roll. But suppose the
machine is broken, so that each globe is imbalanced and irregularly weighted in
its own unique way. In this case, we take $S = \bS^2$ to be the unit sphere in
$\bR^3$ and $\xi_{ij} \in S$ would denote the result of the $j$th roll of the
$i$th globe created by the machine.

In situations such as these, it would be natural to assume that $\xi$ has
certain symmetries. More specifically, we assume that
\begin{enumerate}[(i)]
  \item each row of the array, $\xi_i = \{\xi_{ij}: j \in \bN\}$, is an
        exchangeable sequence of $S$-valued random variables, and
  \item the sequence of rows, $\{\xi_i: i \in \bN\}$, is an exchangeable
        sequence of $S^\infty$-valued random variables.
\end{enumerate}
If $\xi$ satisfies (i) and (ii) above, then we say that $\xi$ is a 
\emph{row exchangeable} array. It is easy to see that $\xi$ is row exchangeable
if and only if $\{\xi_{\si(i),\tau_i(j)}\}$ and $\{\xi_{ij}\}$ have the same
finite-dimensional distributions whenever $\si$ and $\tau_i$ are (finite)
permutations of $\bN$.

In a collection of papers (see \cite{Aldous1981, Aldous1985, Hoover1979,
Hoover1982}), Aldous and Hoover considered different forms of exchangeability
for random arrays. One form they considered was \emph{separate exchangeability},
which means that $\{\xi_{\si(i), \tau(j)}\}$ and $\{\xi_{ij}\}$ have the same
finite-dimensional distributions whenever $\si$ and $\tau$ are permutations.
Clearly, row exchangeability implies separate exchangeability, but not
conversely.

Separately exchangeable arrays can be characterized by a certain representation
involving i.i.d.~uniform random variables. This representation theorem is given
below in Theorem \ref{T:AldousHoov}, and was originally proven independently by
Aldous \cite{Aldous1981,Aldous1985} and Hoover \cite{Hoover1979,Hoover1982}. A
proof can also be found in \cite[Corollary 7.23]{Kallenberg2005}. The version
given here appears in \cite{Kallenberg1989}.

\begin{thm}\label{T:AldousHoov}
  Let $\xi = \{\xi_{ij}: i, j \in \bN\}$ be an array of $S$-valued random
  variables. Then $\xi$ is separately exchangeable if and only if there exists a
  measurable function $g: \bR^4 \to S$ and an i.i.d.~collection of random
  variables $\{\al, \be_i, \eta_j, \la_ {ij}: i, j \in \bN\}$, uniformly
  distributed on $(0, 1)$, such that the array $\{g(\al, \be_i, \eta_j,
  \la_{ij})\}$ has the same finite-dimensional distributions as $\xi$.
\end{thm}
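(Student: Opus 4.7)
The easy direction ($\Leftarrow$) is a routine check. For finite permutations $\si,\tau$ of $\bN$, the reindexed family $\{\al, \be_{\si(i)}, \eta_{\tau(j)}, \la_{\si(i),\tau(j)}\}$ has the same joint distribution as $\{\al, \be_i, \eta_j, \la_{ij}\}$ by independence and identical distribution, so applying $g$ pointwise preserves every finite-dimensional distribution of the array.

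For the nontrivial direction, my plan is a layered application of de Finetti's theorem combined with Borel-space coding. Since $S$ is complete and separable, a Borel isomorphism reduces the problem to $S \subseteq [0,1]$, and any standard Borel random element can then be encoded as a single uniform on $(0,1)$. I would extract the four layers of randomness in turn. First, let $\al$ be a uniform encoding of the $\si$-algebra $\mathcal{I}$ of events invariant under the product group of row and column permutations; this is the ergodic decomposition, and conditional on $\mathcal{I}$ I may assume the array is ergodic under this group. Second, given $\al$, the sequence of rows $\{\xi_i\}$ is an exchangeable sequence in $S^\infty$, so de Finetti produces a directing random measure per row which I code as a uniform $\be_i$. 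A symmetric argument on columns, which is where \emph{separate} (as opposed to merely row) exchangeability is essential, produces uniforms $\eta_j$. Finally, a standard transfer/randomization lemma supplies cell-level i.i.d.\ uniforms $\la_{ij}$ on an enlarged probability space and a measurable $g:\bR^4\to S$ realizing the representation.

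The hard part will be the column step: after conditioning on $\al$ and $\{\be_i\}$, one must show that the residual cross-row column structure is generated by a single column-level sufficient statistic $\eta_j$, so that the $\xi_{ij}$ become conditionally independent given $(\al,\be_i,\eta_j)$. A naive iteration of de Finetti yields only a two-layer (row-only) decomposition and misses the across-row column correlations that separate exchangeability allows. The remedy is a careful analysis of the invariant $\si$-algebras associated with the row, column, and product permutation groups, together with conditional independence lemmas for doubly exchangeable arrays; this is the technical core of the arguments of Aldous and Hoover, and is reworked more efficiently in Kallenberg's treatment via invariant random measures. Once this conditional independence is in hand, the final encoding of the four layers as independent uniforms through Borel isomorphism is routine.
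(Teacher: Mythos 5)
First, a point of reference: the paper does not prove Theorem \ref{T:AldousHoov} at all. It is quoted from the literature, with the proof attributed to Aldous and Hoover and to \cite[Corollary 7.23]{Kallenberg2005}. So there is no in-paper argument to compare yours against; your proposal has to stand on its own as a proof of a deep theorem. Your $\Leftarrow$ direction is correct and complete: the reindexed family $\{\al, \be_{\si(i)}, \eta_{\tau(j)}, \la_{\si(i),\tau(j)}\}$ is equal in law to the original because the reindexing is injective on an i.i.d.\ family, and applying $g$ cellwise transfers this to the array.

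The $\Rightarrow$ direction, however, has a genuine gap, and you identify it yourself: the entire content of the theorem is the claim that, after conditioning on the shift-invariant information $\al$ and the row-level variables $\be_i$, the residual dependence across rows within a column is mediated by a \emph{single} column variable $\eta_j$ together with independent cell noise $\la_{ij}$. Your proposal defers exactly this step to ``the technical core of the arguments of Aldous and Hoover,'' which is to say the proof is not given. Iterated de Finetti genuinely does not produce it: applying de Finetti to the exchangeable sequence of rows $\{\xi_i\}$ in $S^\infty$ yields one directing random measure on $S^\infty$ (not ``a directing measure per row''---the per-row uniforms $\be_i$ only arise after a separate coding/transfer step), and conditionally on that measure the rows are i.i.d., but nothing in this construction controls how the coordinates $j \mapsto \xi_{ij}$ of different rows are coupled through a common column index. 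Establishing that coupling requires the conditional-independence and coding lemmas specific to exchangeable arrays (in Kallenberg's treatment, the machinery surrounding his Lemmas 7.12--7.14 and the contractability/coupling arguments), none of which is reproduced or replaced here. As written, your argument proves only a two-level (row-only) representation $\xi_{ij} = f(\al, \be_i, \la_{ij})$ --- which, not coincidentally, is the content of the paper's Theorem \ref{T:rowExRepr} for the strictly stronger hypothesis of row exchangeability --- and then asserts that the column layer can be added. To make this a proof you would need to either carry out the invariant-$\si$-algebra analysis in detail or explicitly cite the external result, in which case the theorem is being quoted rather than proved, exactly as the paper itself does.
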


The main results of the present work are twofold. First, we establish an
analogous representation theorem for row exchangeable arrays (see Theorem
\ref{T:rowExRepr}). We then use this representation theorem to address the
problem of performing Bayesian inference on row exchangeable arrays. For the
latter, we start by proving a de Finetti theorem for row exchangeability (see
Theorem \ref{T:ex=>IDP}). According to this theorem, if $\xi$ is row
exchangeable, then there exists a sequence of random measure $\mu_1, \mu_2,
\ldots$ on $S$ and a random measure $\vpi$ on $M_1 = M_1(S)$, the space of
probability measures on $S$, such that
\begin{itemize}
  \item given $\vpi$, the sequence $\mu_1, \mu_2, \ldots$ is i.i.d.~with
        distribution $\vpi$, and
  \item for each $i$, given $\mu_i$, the sequence $\xi_{i1}, \xi_{i2}, \ldots$
        is i.i.d.~with distribution $\mu_i$.
\end{itemize}
We call the random measures $\mu_i$ the \emph{row distributions of $\xi$}, and
we call $\vpi$ the \emph{row distribution generator}. To keep our formulas
concise, we adopt the following notation:
\[
  X_{in} = \begin{pmatrix}
    \xi_{i1} & \cdots & \xi_{in}
  \end{pmatrix}, \qquad
  \bmX_{mn} = \begin{pmatrix}
    X_{1n} \\ \vdots \\ X_{mn}
  \end{pmatrix}
  = \begin{pmatrix}
    \xi_{11} & \cdots & \xi_{1n} \\
    \vdots & \ddots & \vdots \\
    \xi_{m1} & \cdots & \xi_{mm}
  \end{pmatrix}, \qquad
  \bmmu_m = \begin{pmatrix}
    \mu_1 \\ \vdots \\ \mu_m
  \end{pmatrix}.
\]
Our objective is to make inferences about the future of the process $\xi$ based
on past observations. That is, if $M, N, N' \in \bN$ and $N < N'$, then we wish
to compute
\begin{equation}\label{main-goal}
  \cL(\xi_{ij}: i \le M, \; N < j \le N' \mid \bmX_{MN})
\end{equation}
in terms of the prior distribution of $\xi$. Here, the notation $\cL(X \mid Y)$
denotes the regular conditional distribution of $X$ given $Y$. We also adopt the
convention that a variable with a $0$ subscript is omitted. For example, when $m
= 1$, we understand $\cL(\mu_m \mid \bmX_{MN}, \bmmu_{m - 1})$ to mean
$\cL(\mu_1 \mid \bmX_{MN})$.

Our main result concerning this problem is the following.

\begin{thm}\label{T:main}
  The conditional distribution \eqref{main-goal} is entirely determined by the
  conditional distributions,
  \begin{equation}\label{main-goal3}
    \cL(\mu_m \mid X_{mN}, \ldots, X_{MN}, \bmmu_{m - 1}),
  \end{equation}
  where $1 \le m \le M$.
\end{thm}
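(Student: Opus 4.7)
The plan is to factor out the latent row distributions $\bmmu_M$ given by the de Finetti theorem (Theorem~\ref{T:ex=>IDP}), decompose the posterior over $\bmmu_M$ via the chain rule, and invoke conditional independence to trim the conditioning set down to \eqref{main-goal3}.

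First, I would condition on $\bmmu_M$. By Theorem~\ref{T:ex=>IDP}, given $\bmmu_M$ the $M$ rows of $\xi$ are mutually independent, with row $i$ being i.i.d.\ from $\mu_i$. In particular, $\bmX_{MN}$ is conditionally independent of the future block $\{\xi_{ij}: i \le M,\ N < j \le N'\}$ given $\bmmu_M$, and the future block has the product distribution $\bigotimes_{i=1}^M \mu_i^{\otimes (N'-N)}$. Hence
\begin{equation*}
  \cL(\xi_{ij}: i \le M,\ N < j \le N' \mid \bmX_{MN})
  = \int \Bigl(\bigotimes_{i=1}^M \mu_i^{\otimes (N'-N)}\Bigr) \, d\cL(\bmmu_M \mid \bmX_{MN}),
\end{equation*}
so it suffices to show that the posterior $\cL(\bmmu_M \mid \bmX_{MN})$ is determined by the distributions in \eqref{main-goal3}.

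Second, since $S$ and $M_1(S)$ are Polish, regular conditional distributions exist, and I can disintegrate the posterior via the chain rule as the successive composition of the kernels $\cL(\mu_m \mid \bmmu_{m-1}, \bmX_{MN})$ for $m = 1, \ldots, M$. The heart of the argument is then the identity
\begin{equation*}
  \cL(\mu_m \mid \bmmu_{m-1}, \bmX_{MN})
  = \cL(\mu_m \mid \bmmu_{m-1}, X_{mN}, \ldots, X_{MN}).
\end{equation*}
This reduces to showing that, given $\bmmu_{m-1}$, the block $(X_{1N}, \ldots, X_{m-1,N})$ is conditionally independent of $(\mu_m, X_{mN}, \ldots, X_{MN})$. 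By the de Finetti decomposition, given $\bmmu_M$ the rows $X_{iN}$ are independent with $X_{iN} \sim \mu_i^{\otimes N}$, and for $i < m$ the row $X_{iN}$ depends only on $\mu_i$ and independent noise. A short computation with conditional expectations---factoring first through $\bmmu_M$ and then averaging over $(\mu_m, \ldots, \mu_M)$ given $\bmmu_{m-1}$---yields the required independence.

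Chaining these identities reconstructs $\cL(\bmmu_M \mid \bmX_{MN})$ entirely from the kernels \eqref{main-goal3}, and the first display then recovers \eqref{main-goal}. The main obstacle is the conditional independence step: it is intuitively clear from the graphical structure $\vpi \to \mu_i \to X_{iN}$, but turning this intuition into a rigorous statement about regular conditional distributions on Polish spaces---particularly since the $\mu_i$ are themselves random \emph{measures} rather than random numbers---requires careful bookkeeping.
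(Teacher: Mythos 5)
Your proposal follows essentially the same route as the paper: reduce \eqref{main-goal} to the posterior $\cL(\bmmu_M \mid \bmX_{MN})$ via the conditional product form of the future block (the paper's Theorem \ref{T:muSuffices}), disintegrate that posterior by the chain rule, and drop the first $m-1$ rows from each kernel by a conditional independence property (the paper's Theorem \ref{T:MarkovProp}). The one step you under-justify is the opening claim that ``by Theorem \ref{T:ex=>IDP}, given $\bmmu_M$ the $M$ rows are mutually independent with row $i$ i.i.d.\ from $\mu_i$.'' Theorem \ref{T:ex=>IDP} only gives the law of each row conditioned on \emph{its own} $\mu_i$; the joint statement $\cL(\bmX_{MN} \mid \bmmu_M) = \prod_{i=1}^M \mu_i^N$ is genuinely stronger and is the content of Theorem \ref{T:condOnBmu}, which the paper proves separately using the representation $\xi_{ij} = f(\al, \be_i, \la_{ij})$ from Theorem \ref{T:rowExRepr} to show $\mu_i \in \ol{\si(\al, \be_i)}$. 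Both your first display and your ``short computation'' for the conditional independence step rest on this joint product form, so it must be cited or reproved rather than attributed to the de Finetti theorem; with that repair the argument is complete and matches the paper's.
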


The proof of Theorem \ref{T:main} (see Section \ref{S:comp-post}) is
constructive and provides us with a way of using \eqref{main-goal3} to compute
\eqref{main-goal}.

In follow-up work (see \cite{Donald2025a}), we take on the question of how to
assign a non-informative prior to $\xi$. According to our de Finetti theorem,
this is equivalent to assigning a prior to $\vpi$. Note that $\vpi$ is a random
measure on $M_1(S)$. That is, $\vpi$ is an $M_1(M_1(S))$-valued random variable.
Its prior is therefore an element of $M_1(M_1(M_1(S)))$. Our approach to
assigning a prior will be to use a variant of Ferguson's \cite{Ferguson1973}
Dirichlet process.

\section{Background}

\subsection{Random measures and kernels}

If $(X, \cU)$ is a topological space, then $\cB(X) = \si(\cU)$ denotes the Borel
$\si$-algebra on $X$. We may sometimes denote $\cB(X)$ by $\cB_X$. We use
$\cR^d$ to denote the Borel $\si$-algebra on $\bR^d$ with the Euclidean
topology. If $(S, \cS)$ is a measurable space and $B \in \cS$, then $\cS|_B =
\{A \cap B: A \in \cS\} = \{A \in \cS: A \subseteq B\}$. We let $\bR^* =
[-\infty, \infty]$ denote the extended real line, equipped with the topology
generated by the metric $\rho(x,y) = |\tan^{-1}(x) - \tan^{-1} (y)|$. We use
$\cR^*$ to denote $\cB(\bR^*)$. Note that $\cR^* = \{A \subseteq \bR^*: A \cap
\bR \in \cR\}$.

Let $S$ be a complete and separable metric space with metric $\bfd$ and let $\cS
= \cB(S)$. Let $M = M(S)$ denotes the space of $\si$-finite measures on $S$. For
$B \in \cS$, we define the projection $\pi_B: M \to \bR^*$ by $\pi_B(\nu) = \nu
(B)$. If $T$ is a set and $\mu: T \to M$, then we adopt the notation $\mu(t, B)
= (\mu(t))(B)$. Let $\cM = \cM(S)$ be the $\si$-algebra on $M$ defined by $\cM =
\si(\{\pi_B: B \in \cS\})$. Note that $\cM$ is the collection of all sets of the
form $\{\nu \in M: \nu(B) \in A\}$, where $B \in \cS$ and $A \in \cB([0,
\infty])$. Given a probability space, $(\Om, \cF, P)$, a \emph{random measure on
$S$} is a function $\mu: \Om \to M$ that is $(\cF, \cM)$-measurable.

Let $M_1 = M_1(S)$ be the set of all probability measures on $S$. Note that $M_1
= \{\nu \in M: \nu(S) \in \{1\}\}$. Hence, $M_1 \in \cM$ and we may define
$\cM_1$ by $\cM_1 = \cM|_ {M_1}$. A \emph{random probability measure on $S$} is
a function $\mu: \Om \to M_1$ that is $(\cF, \cM_1)$-measurable, or
equivalently, a random measure taking values in $M_1$.

We equip $M_1$ with the Prohorov metric, $\bmpi$, which metrizes weak
convergence. Since $S$ is complete and separable, $M_1$ is complete and
separable under $\bmpi$. It can be shown that $\cM_1 = \cB(M_1)$. (See, for
example, \cite[Theorem 2.3]{Gaudard1989}.)

Unless otherwise specified, we will equip $S^\infty$ with the metric
$\bfd^\infty$ defined by
\[
  \bfd^\infty(x, y) = \sum_{n = 1}^\infty \frac{\bfd(x_n, y_n) \wedge 1}{2^n}.
\]
Note that $x \to y$ in $S^\infty$ if and only if $x_n \to y_n$ in $S$ for all
$n$. In particular, the metric $\bfd^\infty$ induces the product topology on
$S^\infty$. Consequently, $\cS^\infty = \cB(S^\infty)$ and $S^\infty$ is
separable. In fact, $(S^\infty, \bfd^\infty)$ is a complete and separable metric
space.

Let $(T, \cT)$ be a measurable space. A function $\mu: T \to M$ is called a 
\textit{kernel from $T$ to $S$} if $\mu(\cdot, B)$ is $(\cT, \cR^*)$-measurable
for all $B \in \cS$. By \cite[Lemma 1.37]{Kallenberg1997}, given a function
$\mu: T \to M$ and a $\pi$-system $\cC$ such that $\cS = \si(\cC)$, the
following are equivalent:
\begin{enumerate}[(i)]
  \item $\mu$ is a kernel.
  \item $\mu(\cdot, B)$ is $(\cT, \cR^*)$-measurable for all $B \in \cC$.
  \item $\mu$ is $(\cT, \cM)$-measurable.
\end{enumerate}
A \textit{probability kernel} is a kernel taking values in $M_1$. Note that a
random measure on $S$ is just a kernel from $\Om$ to $S$, and a random
probability measure on $S$ is just a probability kernel from $\Om$ to $S$.

It can be shown that the function $\mu \mapsto \mu^\infty$ mapping $M_1$ to $M_1
(S^\infty)$ is measurable. Therefore, if $\mu$ is a random probability measure
on $S$, then $\mu^\infty$ is a random probability measure on $S^\infty$. In
fact, the random variable $\mu^\infty$ is $\si(\mu)$-measurable.

\subsection{Regular conditional distributions}

If $X$ is an $S$-valued random variable and $\mu$ is a probability measure on
$S$, then we write $X \sim \mu$, as usual, to mean that $X$ has distribution
$\mu$. We also use $\cL(X)$ to denote the distribution of $X$, so that $X \sim
\mu$ and $\cL(X) = \mu$ are synonymous. If $\cL(X) = \mu$ and $B \in \cS$, then
we use $\cL(X; B)$ to denote $\mu(B)$.

Let $\cG \subseteq \cF$ be a $\si$-algebra. A \emph{regular conditional
distribution for $X$ given $\cG$} is a random probability measure $\mu$ on $S$
such that $P(X \in A \mid \cG) = \mu(A)$ a.s.~for all $A \in \cS$. Since $S$ is
a complete and separable metric space, regular conditional distributions exist.
In fact, such a $\mu$ exists whenever $X$ takes values in a standard Borel
space. Moreover, $\mu$ is unique in the sense that if $\mu$ and $\wt \mu$ are
two such random measures, then $\mu = \wt \mu$ a.s. That is, with probability
one, $\mu(B) = \wt \mu(B)$ for all $B \in \cS$. The random measure $\mu$ may be
chosen so that it is $\cG$-measurable.

We use the notation $\cL(X \mid \cG)$ to refer to the regular conditional
distribution of $X$ given $\cG$. If $\mu$ is a random probability measure, the
notation ${X \mid \cG} \sim \mu$ means $\cL(X \mid \cG) = \mu$ a.s. We use the
semicolon notation in the same way as for unconditional distributions so that,
for instance, $\cL({X \mid \cG}; A) = P(X \in A \mid \cG)$ a.s.

Let $(T, \cT)$ be a measurable space and $Y$ a $T$-valued random variable. Then
there exists a probability kernel $\mu$ from $T$ to $S$ such that $X \mid \si
(Y) \sim \mu(Y)$. Moreover, $\mu$ is unique in the sense that if $\mu$ and $\wt
\mu$ are two such probability kernels, then $\mu = \wt \mu$, $\cL(Y)$-a.e. In
this case, we will typically omit the $\si$, and write, for instance, that $\cL
(X \mid Y) = \mu(Y)$. Note that for fixed $y \in T$, the probability measure
$\mu(y, \cdot)$ is not uniquely determined, since the kernel $\mu$ is only
unique $\cL(Y)$-a.e. Nonetheless, if a particular $\mu$ has been fixed, we will
use the notation $\cL(X \mid Y = y)$ to denote the probability measure $\mu(y,
\cdot)$.

\subsection{Conditional independence}

A sequence $\xi = \{\xi_i: i \in \bN\}$ of $S$-valued random variables is
\textit{conditionally i.i.d.}~if there exists a $\si$-algebra $\cG \subseteq
\cF$ and a random probability measure $\nu$ on $S$ such that
\begin{equation}\label{condiidDef}
  P(\xi \in A \mid \cG) = \nu^\infty(A) \text{ a.s.},
\end{equation}
for all $A \in \cS^\infty$.

If there is a random probability measure $\mu$ such that $P(\xi \in A \mid \mu)
= \mu^\infty(A)$, then $\xi$ is conditionally i.i.d.~given $\cG = \si(\mu)$. The
converse is given by the following lemma.

\begin{lemma}\label{L:G=sig(mu)}
  If $\xi$ satisfies \eqref{condiidDef}, then there exists a random
  probability measure $\mu$ on $S$ such that
  \begin{equation}\label{condiid}
    P(\xi \in A \mid \mu) = \mu^\infty(A) \text{ a.s.},
  \end{equation}
  for all $A \in \cS^\infty$. Moreover, $\mu$ can be chosen so that $\mu \in
  \cG$ and $\mu = \nu$ a.s.
\end{lemma}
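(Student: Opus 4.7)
The plan is to produce $\mu$ as a $\cG$-measurable regular conditional distribution of the first coordinate of $\xi$, show that it agrees with $\nu$ almost surely, and then verify the desired conditioning identity. Since $S$ is complete and separable, let $\mu := \cL(\xi_1 \mid \cG)$, chosen so that $\mu$ is $\cG$-measurable as allowed by the existence theorem for regular conditional distributions recalled in the background subsection. Then $\mu$ is automatically a $\cG$-measurable random probability measure on $S$, and $\si(\mu) \subseteq \cG$.

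The first substantive step is to show $\mu = \nu$ a.s., meaning that the equality holds as measures on a single $P$-null set. For each $B \in \cS$, applying the hypothesis \eqref{condiidDef} to the cylinder $\{x \in S^\infty : x_1 \in B\}$ gives $\mu(B) = P(\xi_1 \in B \mid \cG) = \nu^\infty(\{x : x_1 \in B\}) = \nu(B)$ a.s. Fix a countable $\pi$-system $\cC \subseteq \cS$ with $\si(\cC) = \cS$, obtainable by closing a countable topological base of $S$ under finite intersections. A countable union of null sets then produces a single null set $N$ off of which $\mu(\om, C) = \nu(\om, C)$ for every $C \in \cC$, and the uniqueness theorem for measures promotes this to $\mu(\om, \cdot) = \nu(\om, \cdot)$ on all of $\cS$ for $\om \notin N$.

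With $\mu = \nu$ a.s.~in hand, verifying \eqref{condiid} is routine. Since $\si(\mu) \subseteq \cG$, the tower property yields $P(\xi \in A \mid \mu) = E[P(\xi \in A \mid \cG) \mid \mu] = E[\nu^\infty(A) \mid \mu]$ a.s. Because $\nu \mapsto \nu^\infty$ is measurable, the a.s.~equality $\mu = \nu$ forces $\mu^\infty = \nu^\infty$ a.s., and $\mu^\infty(A)$ is $\si(\mu)$-measurable, so the conditional expectation collapses to $\mu^\infty(A)$. The main obstacle is the single measure-theoretic step of upgrading the pointwise-in-$B$ family of a.s.~equalities $\mu(B) = \nu(B)$ into the single-null-set statement $\mu = \nu$ a.s.; the countable-$\pi$-system argument handles this cleanly and depends only on the separability of $S$.
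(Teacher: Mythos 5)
Your proof is correct, but it reaches the $\cG$-measurable modification by a genuinely different route than the paper. The paper never looks at $\xi_1$: it observes that the hypothesis makes each $\nu^\infty(A)$, and hence each $\nu(B) = \nu^\infty(B \times S^\infty)$, measurable with respect to the completion $\ol\cG$ of $\cG$, so that $\nu$ itself is $\ol\cG$-measurable as an $M_1(S)$-valued map; it then invokes the standard fact that a $\ol\cG$-measurable map into a Polish space has a $\cG$-measurable version $\mu$ with $\mu = \nu$ a.s. You instead manufacture a $\cG$-measurable candidate directly as $\mu = \cL(\xi_1 \mid \cG)$ and prove $\mu = \nu$ a.s.\ by testing on a countable generating $\pi$-system and upgrading the $B$-by-$B$ almost sure equalities to a single null set. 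Both arguments ultimately lean on separability (yours for the countable $\pi$-system, the paper's for the Polish structure underlying the completion lemma), and both close with the identical tower-property computation $P(\xi \in A \mid \mu) = E[\nu^\infty(A) \mid \mu] = \mu^\infty(A)$. Your version is more elementary and self-contained, avoiding the completion machinery entirely; the paper's is shorter once that machinery is granted and yields the incidental fact that $\nu$ is $\ol\cG$-measurable. One small point worth making explicit in your write-up: the ``uniqueness theorem for measures'' step works because $\mu(\om,\cdot)$ and $\nu(\om,\cdot)$ are both probability measures, so agreement on a generating $\pi$-system suffices with no additional exhaustion hypothesis.
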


\begin{proof}
  Suppose there exists a $\si$-algebra $\cG \subseteq \cF$ and a random
  probability measure $\nu$ on $S$ such that $P(\xi \in A \mid \cG) =
  \nu^\infty(A)$ a.s., for all $A \in \cS^\infty$. Let $(\Om, \ol \cG, \ol P)$
  be the completion of $(\Om, \cG, P)$, so that $\nu^\infty(A)$ is $(\ol \cG,
  \cR)$-measurable for all $A \in \cS^\infty$. This means that $\nu^\infty$ is a
  kernel from $(\Om, \ol \cG)$ to $S^\infty$, and that $\nu^\infty$ is $\ol
  \cG$-measurable. For any $B \in \cS$,
  \[
    \nu(B) = \nu^\infty(B \times S^\infty)
  \]
  is $(\ol \cG, \bR)$-measurable. Hence, $\nu$ is $\ol \cG$-measurable. Choose
  $\mu: \Om \to M_1(S)$ such that $\mu$ is $\cG$-measurable and $\mu = \nu$ a.s.
  Thus, $\mu^\infty = \nu^\infty$ a.s. Since $\mu^\infty \in \si(\mu)$, we have
  \[
    P(\xi \in A \mid \mu)
      = E[P(\xi \in A \mid \cG) \mid \mu]
      = E[\mu^\infty(A) \mid \mu]
      = \mu^\infty(A) \text{ a.s.},
  \]
  for all $A \in \cS^\infty$.
\end{proof}

\begin{cor}\label{C:G=sig(mu)}
  If $\xi$ is conditionally i.i.d.~given $\cG$, then
  \[
    P\left(
      \bigcap_{i = 1}^n \{\xi_i \in A_i\}
    \;\middle|\; \cG \right)
      = \prod_{i = 1}^n P(\xi_i \in A_i \mid \cG) \quad \text{a.s.},
  \]
  for all $n \in \bN$ and all $A_i \in \cS$.
\end{cor}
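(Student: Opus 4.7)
The plan is to reduce the claim to a direct computation inside $M_1(S)$ by replacing the abstract $\sigma$-algebra $\cG$ with a single random measure.

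First I would invoke Lemma \ref{L:G=sig(mu)} to obtain a $\cG$-measurable random probability measure $\mu$ on $S$ such that, for every $A \in \cS^\infty$,
\[
  P(\xi \in A \mid \mu) = \mu^\infty(A) \quad \text{a.s.}
\]
Because $\mu$ is $\cG$-measurable, $\sigma(\mu) \subseteq \cG$, so by the tower property
\[
  P(\xi \in A \mid \cG) = E[P(\xi \in A \mid \cG) \mid \mu] = E[\mu^\infty(A) \mid \mu] = \mu^\infty(A) \quad \text{a.s.}
\]
(Alternatively, this also follows immediately from the assumption $P(\xi \in A \mid \cG) = \nu^\infty(A)$ together with $\mu = \nu$ a.s.)

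Next, I would apply this identity to cylinder events. Fix $n \in \bN$ and $A_1, \ldots, A_n \in \cS$, and set $A = A_1 \times \cdots \times A_n \times S \times S \times \cdots \in \cS^\infty$. By the product-measure definition of $\mu^\infty$,
\[
  \mu^\infty(A) = \prod_{i=1}^n \mu(A_i) \quad \text{a.s.}
\]
Similarly, taking $A^{(i)} = S \times \cdots \times S \times A_i \times S \times \cdots$ (with $A_i$ in the $i$th slot) gives $\mu^\infty(A^{(i)}) = \mu(A_i)$ a.s., so $P(\xi_i \in A_i \mid \cG) = \mu(A_i)$ a.s. Combining these,
\[
  P\left( \bigcap_{i=1}^n \{\xi_i \in A_i\} \;\middle|\; \cG \right)
    = \mu^\infty(A)
    = \prod_{i=1}^n \mu(A_i)
    = \prod_{i=1}^n P(\xi_i \in A_i \mid \cG) \quad \text{a.s.}
\]

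There is no real obstacle here; the only delicate point is keeping track of the a.s.~qualifier. Since $n$ and $A_1, \ldots, A_n$ are fixed, the exceptional null set may depend on these data, but this is exactly what the statement allows, so no uniform-in-$A$ argument (and hence no additional measurability work beyond what Lemma \ref{L:G=sig(mu)} already provides) is required.
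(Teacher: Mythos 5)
Your proposal is correct and follows essentially the same route as the paper: invoke Lemma \ref{L:G=sig(mu)} to replace $\cG$ by a random probability measure $\mu$ with $P(\xi \in A \mid \cG) = \mu^\infty(A)$ a.s., evaluate $\mu^\infty$ on cylinder sets to get the product $\prod_{i=1}^n \mu(A_i)$, and identify each factor $\mu(A_i)$ with $P(\xi_i \in A_i \mid \cG)$ via the one-coordinate cylinder. The only difference is that you spell out the tower-property step the paper leaves implicit, which is a harmless (and arguably clarifying) addition.
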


\begin{proof}
  By Lemma \ref{L:G=sig(mu)},
  \[
    P\left(
      \bigcap_{i = 1}^n \{\xi_i \in A_i\}
    \;\middle|\; \cG \right)
      = \mu^\infty(A_1 \times \cdots \times A_n)
      = \prod_{i = 1}^n \mu(A_i) \quad \text{a.s.}
  \]
  If $A = S^{i - 1} \times A_i \times S^\infty$, then $\mu(A_i) = \mu^\infty(A)
  = P(\xi \in A \mid \cG) = P(\xi_i \in A_i \mid \cG)$ a.s.
\end{proof}

\subsection{Exchangeability and empirical measures}\label{S:emp-meas}

A sequence $\xi = \{\xi_i: i \in \bN\}$ of $S$-valued random variables is
\textit{exchangeable} if
\[
  (\xi_{k_1}, \ldots, \xi_{k_m}) \overset{d}{=} (\xi_1, \ldots, \xi_m)
\]
whenever $k_1, \ldots, k_m$ are distinct elements of $\bN$. A function $\si: \bN
\to \bN$ is called a \textit{(finite) permuatation} if $\si$ is bijective and
there exists $n_0 \in \bN$ such that $\si(n) = n$ for all $n \ge n_0$. The
sequence $\xi$ is exchangeable if and only if $\{\xi_{\si(i)}\}$ and $\{\xi_i\}$
have the same finite-dimensional distributions whenever $\si$ is a permutation.

By \cite[Theorem 1.1]{Kallenberg2005}, we have that $\xi$ is conditionally
i.i.d.~if and only if $\xi$ is exchangeable. In that case, the random measure
$\mu$ in \eqref{condiid} is a.s.~unique, $\si(\xi)$-measurable, and satisfies
$\mu_n(B) \to \mu(B)$ a.s., for all $B \in \cS$, where
\[
  \mu_n = \frac 1 n \sum_{i = 1}^n \de_{\xi_i}
\]
are the empirical measures (see \cite[Proposition 1.4]{Kallenberg2005}). In
Proposition \ref{P:empToPrior} below, we show that $\mu_n \to \mu$ a.s.~in $M_1$
under the Prohorov metric.

\begin{lemma}\label{L:cesaroMble}
  Let $T$ a convex subset of a real vector space. Let $\rho$ be a metric on $T$
  such that $(T, \rho)$ is complete and separable, and for all $n \in \bN$, the
  function $y \mapsto n^{-1} \sum_{i = 1}^n y_i$ is continuous from $T^n$ to
  $T$. Let $g: S \to T$ be a continuous function, and fix $y_0 \in T$. Define
  $\ph_n: S^\infty \to T$ by
  \[
    \ph_n(x) = n^{-1} \sum_{i = 1}^n g(x_i).
  \]
  Define $B = \{x \in S^\infty: \lim_{n \to \infty} \ph_n(x) \text{ exists}\}$,
  and define $\la: S^\infty \to T$ by
  \[
    \la(x) = \begin{cases}
      \lim_{n \to \infty} \ph_n(x) &\text{if $x \in B$},\\
      y_0 &\text{if $x \notin B$}.
    \end{cases}
  \]
  Then $\ph_n$ is continuous, $B \in \cS^\infty$, and $\la$ is $(\cS^\infty,
  \cB(T))$-measurable.
\end{lemma}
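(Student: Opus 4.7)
The plan is to prove the three claims in the order stated. For the continuity of $\ph_n$, I would exhibit $\ph_n$ as a composition of continuous maps: the coordinate projections $x \mapsto x_i$ from $(S^\infty, \bfd^\infty)$ to $S$ are continuous because $\bfd^\infty$ induces the product topology on $S^\infty$; composing with the continuous function $g$ makes each $x \mapsto g(x_i)$ continuous into $T$; hence the tuple map $x \mapsto (g(x_1), \ldots, g(x_n))$ is continuous from $S^\infty$ into $T^n$ with its product metric; and finally the averaging map $T^n \to T$ is continuous by hypothesis.

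For the measurability of $B$, I would use the completeness of $(T, \rho)$ to replace ``converges'' by ``Cauchy'', which yields
\[
  B = \bigcap_{k = 1}^\infty \bigcup_{N = 1}^\infty \bigcap_{m, n \ge N} \{x \in S^\infty : \rho(\ph_m(x), \ph_n(x)) < 1/k\}.
\]
Since $\rho$ is continuous on $T \times T$ and $(\ph_m, \ph_n)$ is continuous from $S^\infty$ to $T \times T$ by the previous step, each set in the intersection is open in $S^\infty$, hence lies in $\cS^\infty = \cB(S^\infty)$, and the countable combination is in $\cS^\infty$.

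For the measurability of $\la$, I would introduce auxiliary functions $\la_n : S^\infty \to T$ defined by $\la_n(x) = \ph_n(x)$ when $x \in B$ and $\la_n(x) = y_0$ when $x \notin B$. Because $B \in \cS^\infty$ and $\ph_n$ is continuous, each $\la_n$ is $(\cS^\infty, \cB(T))$-measurable. By the definitions of $B$ and $\la$, we have $\la_n(x) \to \la(x)$ for every $x \in S^\infty$. A standard argument then gives the measurability of the pointwise limit: for any open $V \subseteq T$, writing $V_k = \{t \in T : \rho(t, V^c) > 1/k\}$, one verifies
\[
  \la^{-1}(V) = \bigcup_{k = 1}^\infty \bigcup_{N = 1}^\infty \bigcap_{n \ge N} \la_n^{-1}(V_k),
\]
which is measurable.

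Nothing here is technically deep; the only points that require a moment's attention are the Cauchy reformulation used to express $B$ as a countable Boolean combination of open sets, and the metric-space-valued pointwise-limit argument at the end. The continuity of the averaging map and of $g$, together with the product-topology description of $S^\infty$, do all the heavy lifting.
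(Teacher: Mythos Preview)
Your proof is correct and follows essentially the same approach as the paper's: both factor $\ph_n$ as a composition of continuous maps, then deduce the measurability of $B$ and $\la$ from standard facts about sequences of measurable functions into a complete (separable) metric space. The only difference is that you spell out the Cauchy-criterion and pointwise-limit arguments explicitly, whereas the paper simply invokes them as known results.
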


\begin{proof}
  Define $\Phi_n: T^n \to T$ by $\Phi(y) = n^{-1} \sum_{i = 1}^n y_i$, so that
  $\Phi_n$ is continuous. Let $\pi_n: S^\infty \to S^n$ be the projection
  mapping $x \mapsto (x_1, \ldots, x_n)$, so that $\pi_n$ is continuous. Define
  $g_n: S^n \to T^n$ by $g_n(x) = (g(x_1), \ldots ,g(x_n))$. Note that $\ph_n =
  \Phi_n \circ g_n \circ \pi_n$, which shows that $\ph_n$ is continuous.

  Since $(T, \rho)$ is complete and separable, and each $\ph_n$ is measurable,
  it follows that $B$ is measurable. Thus, for every $n$, the function $x
  \mapsto \ph_n(x) 1_B + y_0 1_{B^c}$ is $(\cS^\infty, \cB(T))$-measurable.
  Since $(T, \rho)$ is separable and $\la$ is the pointwise limit of these
  functions, it follows that $\la$ is $(\cS^\infty, \cB(T))$-measurable.
\end{proof}

\begin{prop}\label{P:empToPrior}
  Suppose $\xi$ is exchangeable. Let $\mu_n$ be the empirical measures and $\mu$
  the a.s.~unique measure in \eqref{condiid}. Then, with probability one, $\mu_n
  \to \mu$ weakly.
\end{prop}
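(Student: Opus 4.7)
The plan is to upgrade the coordinate-wise convergence $\mu_n(B) \to \mu(B)$ a.s.~for each fixed $B \in \cS$ (noted in the discussion preceding the proposition) to weak convergence $\mu_n \to \mu$ a.s.~in $(M_1, \bmpi)$. The key idea is that weak convergence of probability measures on a separable metric space is characterized by a countable family of bounded continuous test functions, which will let me replace a continuum of exceptional null sets with a single one.

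First I would observe that, by the characterization of exchangeability in terms of conditional i.i.d.-ness and Lemma \ref{L:G=sig(mu)}, conditionally on $\mu$ the sequence $\xi = (\xi_1, \xi_2, \ldots)$ is i.i.d.~with distribution $\mu$. For any bounded continuous $f: S \to \bR$, applying Lemma \ref{L:cesaroMble} with $T = \bR$ and $g = f$ shows that $\mu_n(f) = n^{-1} \sum_{i=1}^n f(\xi_i)$ is $\cS^\infty$-measurable as a function of $\xi$. Given $\mu$, the strong law of large numbers (applied to the real-valued i.i.d.~sequence $f(\xi_1), f(\xi_2), \ldots$ with mean $\mu(f)$) yields $\mu_n(f) \to \mu(f)$ a.s.~conditionally, hence also unconditionally. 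This provides, for each $f$, a full-measure event $A_f$ on which $\mu_n(f) \to \mu(f)$.

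Next, since $S$ is Polish, there is a countable family $\{f_k\} \subseteq C_b(S)$ that is convergence-determining for the weak topology on $M_1$; for instance, one may take a countable dense subset in the uniform norm of the unit ball of Lipschitz functions, which metrizes weak convergence via the bounded Lipschitz metric. Setting $A = \bigcap_k A_{f_k}$, we have $P(A) = 1$, and on $A$ the convergence $\mu_n(f_k) \to \mu(f_k)$ holds simultaneously for every $k$, which by the choice of $\{f_k\}$ forces $\mu_n \to \mu$ weakly.

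The delicate point is really the selection of the countable convergence-determining family: one must be sure that $M_1(S)$ under the Prohorov metric admits such a family for a general Polish $S$. This is standard — it follows from the separability of $M_1$ under $\bmpi$ recorded in the background section — but it is the step I would take the most care to state precisely, possibly by invoking a specific reference from \cite{Kallenberg1997}. The measurability bookkeeping (ensuring that $\omega \mapsto \mu_n(\omega, f)$ and its limit are genuine random variables so that the a.s.~statements make sense) is routine given Lemma \ref{L:cesaroMble}.
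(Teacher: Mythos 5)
Your argument is correct, but it takes a genuinely different route from the paper's. The paper applies Lemma \ref{L:cesaroMble} once, with $T = M_1$ and $g(x) = \de_x$, so that the entire empirical measure $\mu_n = \ph_n(\xi)$ is treated as a single $M_1$-valued Ces\`aro average; it then quotes Varadarajan's theorem (\cite[Theorem 11.4.1]{Dudley1989}) to get, for each \emph{fixed} $m \in M_1$, that $m^\infty(\la^{-1}(m)) = 1$, and finally transfers this to the random $\mu$ by a disintegration argument over the measurable graph of $\la$. You instead scalarize: you choose a countable convergence-determining family $\{f_k\}$ of bounded continuous functions, prove $\mu_n(f_k) \to \mu(f_k)$ a.s.\ for each $k$ via a conditional strong law of large numbers, and intersect the countably many null sets. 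In effect you re-prove the conditional form of Varadarajan's theorem rather than citing it; this is more self-contained but obliges you to justify two things the paper outsources: (i) the existence of the countable determining family, which is indeed standard for separable metric $S$ and which you rightly flag as the point needing a precise reference, and (ii) the conditional SLLN with the \emph{random} centering $\mu(f_k)$. Point (ii) is where your sketch is thinnest: the step ``a.s.\ conditionally, hence also unconditionally'' conceals exactly the graph-measurability and disintegration argument that the paper writes out explicitly (and later isolates as Proposition \ref{P:cond-LLN}, proved by the identical technique), so the bookkeeping you defer is substantive rather than notational --- though it is completable and introduces no circularity. On balance, the paper's version concentrates all the measure-theoretic care into one citation plus one disintegration computation, while yours trades the citation for an elementary test-function argument at the cost of repeating that care for each $f_k$.
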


\begin{proof}
  We apply Lemma \ref{L:cesaroMble} with $T = M_1$, $\rho = \bmpi$
  the Prohorov metric, $g(x) = \de_x$, and $y_0$ arbitrary. In this case, by
  \cite[Theorem 11.4.1]{Dudley1989}, given any $m \in M_1$, we have $m^\infty(B)
  = 1$ and
  \[
    m^{\infty}(\{x \in S^\infty: \la(x) = m\}) = m^{\infty} (\la^{-1}(m)) = 1.
  \]
  In other words, the empirical distributions of an i.i.d.~sequence with
  distribution $m$ converge weakly to $m$ a.s.

  Note that $\mu_n = \ph_n(\xi)$ and
  \[
    P(\xi \in B) = E[P(\xi \in B \mid \mu)] = E[\mu^\infty(B)] = 1,
  \]
  since $\mu^\infty(B) \equiv 1$. Hence, $\mu_n \to \la(\xi)$ a.s., so it
  suffices to prove that $\la(\xi) = \mu$ a.s.

  For this, let $G = \{(x, m) \in S^\infty \times M_1: \la(x) = m\}$ be the
  graph of $\la$. Since $\cM_1 = \cB(M_1)$, we have that $\la$ is $(\cS^\infty,
  \cM_1)$-measurable, which implies $G \in \cS^\infty \otimes \cM_1$. Thus, by
  \cite[Theorem 5.4]{Kallenberg1997},
  \begin{multline*}
    P(\la(\xi) = \mu) = E[P((\xi, \mu) \in G \mid \mu)]
    = E\left[{
      \int_S 1_G(x, \mu)\,\mu^{\infty}(\cdot, dx)
    }\right]\\
    = E\left[{
      \int_S 1_{\la^{-1}(\mu)}(x)\,\mu^{\infty}(\cdot, dx)
    }\right]
    = E[\mu^{\infty}(\la^{-1}(\mu))]
    = 1,
  \end{multline*}
  since $\mu^{\infty}(\la^{-1}(\mu)) \equiv 1$.
\end{proof}

\subsection{Conditional law of large numbers}

We can use the preceding results to prove the following conditional version of
the law of large numbers.

\begin{prop}\label{P:cond-LLN}
  Let $(T, \cT)$ be a measurable space. Suppose $Y$ is a $T$-valued random
  variable, $\xi$ is conditionally i.i.d.~given $\cG$, and $f: T \times S \to
  \bR$ is a measurable function such that $E|f(Y, \xi_1)| < \infty$. If $Y \in
  \cG$ and $Z$ is any version of $E[f(Y, \xi_1) \mid \cG]$ then
  \begin{equation}\label{cond-LLN-1}
    P\left(
      \lim_{n \to \infty} \frac 1 n \sum_{i = 1}^n f(Y, \xi_i) = Z
    \;\middle|\;
      \cG
    \right) = 1 \quad\text{a.s.}
  \end{equation}
  In particular,
  \begin{equation}\label{cond-LLN-2}
    \frac 1 n \sum_{i = 1}^n f(Y, \xi_i) \to E[
      f(Y, \xi_1) \mid \cG
    ] \quad \text{a.s.}
  \end{equation}
\end{prop}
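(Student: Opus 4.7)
The plan is to reduce \eqref{cond-LLN-1} to the classical strong law of large numbers applied to i.i.d.~samples from a $\cG$-measurable version of the directing random measure, with the parameter $Y$ carried through by a joint measurability argument. First, I would apply Lemma~\ref{L:G=sig(mu)} to produce a $\cG$-measurable random probability measure $\mu$ on $S$ satisfying $P(\xi \in A \mid \cG) = \mu^\infty(A)$ a.s.~for all $A \in \cS^\infty$. Taking $A = B \times S^\infty$ gives $\cL(\xi_1 \mid \cG) = \mu$ a.s. Since $Y \in \cG$, the standard identity for conditional expectation through a regular conditional distribution yields
\[
  E[f(Y, \xi_1) \mid \cG] = \int_S f(Y, x)\,\mu(dx) \quad \text{a.s.},
\]
with the right-hand side a.s.~finite by Tonelli and the hypothesis $E|f(Y, \xi_1)| < \infty$. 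Thus any version $Z$ of $E[f(Y, \xi_1) \mid \cG]$ agrees a.s.~with this integral, and it suffices to show the Cesàro averages converge to $\int_S f(Y, x)\,\mu(dx)$ a.s.

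Second, I would set up the necessary joint measurability. Define $\psi_n: T \times S^\infty \to \bR$ by $\psi_n(y, x) = n^{-1} \sum_{i = 1}^n f(y, x_i)$, and let $\psi_\infty(y, x)$ equal $\lim_n \psi_n(y, x)$ where that limit exists in $\bR^*$, and $0$ otherwise; both are $(\cT \otimes \cS^\infty)$-measurable. Let $h(y, m) = \int_S f(y, x)\,m(dx)$ where the integral converges absolutely (and $0$ otherwise); a monotone class argument shows $h: T \times M_1 \to \bR$ is $(\cT \otimes \cM_1)$-measurable. Set
\[
  G = \{(y, m, x) \in T \times M_1 \times S^\infty : \psi_\infty(y, x) = h(y, m)\} \in \cT \otimes \cM_1 \otimes \cS^\infty.
\]
For each fixed $(y, m)$ with $\int |f(y, x)|\,m(dx) < \infty$, the ordinary SLLN applied to an i.i.d.~sample from $m$ yields $m^\infty(G_{(y, m)}) = 1$, where $G_{(y, m)}$ denotes the corresponding section of $G$.

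Third, I would close the argument by a Fubini-type computation mirroring the proof of Proposition~\ref{P:empToPrior}. Since $(Y, \mu)$ is $\cG$-measurable and $P(\xi \in \cdot \mid \cG) = \mu^\infty$, \cite[Theorem~5.4]{Kallenberg1997} gives
\[
  P((Y, \mu, \xi) \in G \mid \cG) = \int_{S^\infty} 1_G(Y, \mu, x)\,\mu^\infty(dx) = \mu^\infty(G_{(Y, \mu)}) \quad \text{a.s.}
\]
On the a.s.~event $\{\int_S |f(Y, x)|\,\mu(dx) < \infty\}$, the section identity from the previous step makes this equal to $1$ a.s. Unwinding the definitions, with $Z = h(Y, \mu)$, yields \eqref{cond-LLN-1}, and \eqref{cond-LLN-2} follows by taking expectations.

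The principal obstacle is managing the parameter $Y$: the sequence $f(Y, \xi_1), f(Y, \xi_2), \ldots$ is not literally i.i.d., but only becomes i.i.d.~after conditioning on $\cG$. The joint measurability of $\psi_\infty$ and $h$, combined with the Kallenberg Fubini theorem for kernels, is precisely what lets us slide the classical SLLN underneath the conditional expectation sign; the rest of the argument is bookkeeping.
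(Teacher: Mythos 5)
Your proof is correct and rests on the same engine as the paper's: identify the a.s.\ limit as a measurable function, note that its graph has full measure under $m^\infty$ for each fixed (integrable) parameter value by the classical SLLN, and then use the disintegration theorem \cite[Theorem 5.4]{Kallenberg1997} to push that statement under the conditional probability. The difference is in how the parameter $Y$ is handled. The paper first proves the special case $S = \bR$, $f(t,x) = x$ (using Lemma \ref{L:cesaroMble} to get the measurable limit functional $\la$ and its graph), and then disposes of $Y$ in one stroke by checking that $\xi_i' = f(Y,\xi_i)$ is itself conditionally i.i.d.\ given $\cG$, so the special case applies to $\{\xi_i'\}$. You instead carry $Y$ through the whole argument, which obliges you to establish joint measurability of $\psi_\infty$ on $T \times S^\infty$ and of $h(y,m) = \int_S f(y,x)\,m(dx)$ on $T \times M_1$ (the latter via a monotone class argument), and to disintegrate over the triple $(Y,\mu,\xi)$. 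Both routes work; the paper's reduction trades your measurability bookkeeping for the (easy) verification that $\{f(Y,\xi_i)\}$ is conditionally i.i.d. One small point to tighten in your version: since you set $\psi_\infty(y,x) = 0$ where the limit fails to exist, the section $G_{(y,m)}$ can contain points where the Ces\`aro averages do not converge (whenever $h(y,m)=0$), so $\{(Y,\mu,\xi)\in G\}$ is a priori a strict superset of the event in \eqref{cond-LLN-1}. You should define $G$ to require that the limit exists \emph{and} equals $h(y,m)$ (the SLLN gives full $m^\infty$-measure to exactly that set); this is the role played by the separate indicator $1_B$ in the paper's computation.
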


\begin{proof}
  Taking expectations in \eqref{cond-LLN-1} yields \eqref{cond-LLN-2}. It thus
  suffices to prove \eqref{cond-LLN-1}. We first prove \eqref{cond-LLN-1} under
  the assumption that $S=\bR$ with the Euclidean metric and $f(t, x)=x$. We
  begin by applying Lemma \ref{L:cesaroMble} with $T = \bR$, $\rho$ the
  Euclidean metric, $g(x) = x$, and $y_0 = 0$, obtaining $\ph_n$, $B$, and
  $\la$. Suppose $m \in M_1(\bR)$ and $\int_\bR |x| \, m(dx) < \infty$. Let $\ol
  x = \int_\bR x \, m(dx)$. By the law of large numbers, $m^\infty(B) = 1$ and
  \[
    m^{\infty}(\{x \in \bR^\infty: \la(x) = \ol x\})
      = m^{\infty}(\la^{-1}(\ol x)) = 1.
  \]
  Since $\xi$ is conditionally i.i.d., there exists a random probability measure
  $\mu$ such that $\xi \mid \cG \sim \mu^\infty$. Thus,
  \[
    E \int_\bR |x| \, \mu(dx)
      = E \int_{\bR^\infty} |\pi_1(x)| \,\mu^\infty(dx)
      = E[ E[|\xi_1| \mid \cG] ]
      = E|\xi_1| < \infty.
  \]
  Hence, there exists $\Om^* \in \cF$ such that $P(\Om^*) = 1$ and
  \[
    \int_\bR |x| \, \mu(\om, dx) < \infty,
  \]
  for all $\om \in \Om^*$. For any such $\om$, define $\ol x_\om = \int_\bR x \,
  \mu(\om, dx)$. As above, we then have $\mu^\infty(\om, B) = 1$ and $\mu^\infty
  (\om, \la^{-1}(\ol x_\om)) = 1$. Since $Z = E[\xi_1 \mid \cG] = \int_\bR x \,
  \mu(dx)$ a.s., we may assume that $Z(\om) = \ol x_\om$ for all $\om \in
  \Om^*$.

  Now, note that $n^{-1} \sum_{i = 1}^n \xi_i = \ph_n(\xi) \to Z$ if and only if
  $\xi \in B$ and $\la(\xi) = Z$. Thus,
  \[
    P\left(
      \lim_{n \to \infty} \frac 1 n \sum_{i = 1}^n \xi_i = Z \;\middle|\; \cG
    \right)
      = E[1_B(\xi) 1_G(\xi, Z) \mid \cG],
  \]
  where $G = \{(x, y): \la(x) = y\}$ is the graph of $\la$. By \cite[Theorem
  5.4]{Kallenberg1997}, since $Z \in \cG$, this is equal to $\int_\bR 1_B(x) 1_G
  (x, Z) \, \mu^\infty(dx)$. Thus, for any $\om \in \Om^*$, we have
  \begin{align*}
    P\left(
      \lim_{n \to \infty} \frac 1 n \sum_{i = 1}^n \xi_i = Z \;\middle|\; \cG
    \right)(\om)
      &= \int_\bR 1_B(x) 1_G(x, \ol x_\om) \, \mu^\infty(\om, dx)\\
    &= \int_B 1_{\la^{-1}(\ol x_\om)}(x) \, \mu^\infty(\om, dx)\\
    &= \mu^\infty(\om, B \cap \la^{-1}(\ol x_\om)) = 1,
  \end{align*}
  and this proves \eqref{cond-LLN-1} under the assumption that $S = \bR$ with
  the Euclidean metric and $f(t, x) = x$.

  To prove the general result, let $\xi_i' = f(Y, \xi_i)$. By \cite[Theorem 5.4]
  {Kallenberg1997},
  \[
    P\left(\bigcap_{j = 1}^n \{\xi_i' \in B_i\} \;\middle|\; \cG \right)
    = \int_{S^n} \prod_{j = 1}^n 1_{B_i}(f(Y, x_i)) \, \nu^n(dx)
    = \prod_{j = 1}^n \int_S 1_{B_i}(f(Y, x_i)) \, \nu(dx_i).
  \]
  Hence, $\{\xi_i'\}$ are conditionally i.i.d.~given $\cG$. The result therefore
  follows by applying the first part of the proof to $\{\xi_i'\}$.
\end{proof}

\section{Bayesian inference for row exchangeable arrays}

In this section, we prove our main results, which include an Aldous-Hoover type
representation (Theorem \ref{T:rowExRepr}) and the proof of Theorem \ref{T:main}
concerning Bayesian inference.

\subsection{An Aldous-Hoover type representation theorem}

Theorem \eqref{T:rowExRepr} below is the analogue of Theorem \ref{T:AldousHoov}
for row exchangeable arrays. It characterizes row exchangeable arrays, and
provides a representation for them in terms of i.i.d.~uniform random variables.

\begin{thm}\label{T:rowExRepr}
  The array $\xi$ is row exchangeable if and only if there exists a measurable
  function $f:\bR^3\to S$ and an i.i.d.~collection of random variables $\{\al,
  \be_i, \la_{ij}: i, j \in \bN\}$, uniformly distributed on $(0, 1)$, such that
  the array $\{f(\al, \be_i, \la_{ij})\}$ has the same finite-dimensional
  distributions as $\xi$.
\end{thm}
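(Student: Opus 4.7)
The plan is to prove the \emph{if} direction by direct conditioning, and to prove the \emph{only if} direction in two stages: first extract a hierarchical conditional i.i.d.\ structure from row exchangeability via two applications of the ordinary de Finetti theorem, and then realize that structure as a deterministic function of i.i.d.\ uniforms using standard randomization lemmas.

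For the \emph{if} direction, assume $\xi_{ij} = f(\al, \be_i, \la_{ij})$. Conditional on $(\al, \be_i)$, the variables $\{\xi_{ij}\}_{j \ge 1}$ are i.i.d.\ measurable functions of the independent uniforms $\{\la_{ij}\}_j$, so each row is exchangeable. Conditional on $\al$, the rows $\xi_i = (f(\al, \be_i, \la_{ij}))_j$ are i.i.d.\ measurable functions of the i.i.d.\ pairs $(\be_i, (\la_{ij})_j)$, so the sequence of rows is exchangeable in $S^\infty$. Hence (i) and (ii) hold.

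For the \emph{only if} direction, since $\{\xi_i\}$ is an exchangeable sequence in the Polish space $S^\infty$, de Finetti's theorem (\cite[Theorem 1.1]{Kallenberg2005}) yields a random probability measure $\tau$ on $S^\infty$ with $\xi_i \mid \tau$ i.i.d.\ $\sim \tau$. Exchangeability of each row forces $\tau$ to be a.s.\ an exchangeable law on $S^\infty$; a second application of de Finetti under $\tau$, via the empirical-measure limit map $\Psi: S^\infty \to M_1$ furnished by Proposition \ref{P:empToPrior}, then produces a random probability measure $\vpi = \tau \circ \Psi^{-1}$ on $M_1$ with $\tau = \int \mu^\infty\,\vpi(d\mu)$ a.s. Setting $\mu_i = \Psi(\xi_i)$ gives the hierarchical structure: given $\vpi$, the $\{\mu_i\}$ are i.i.d.\ $\sim \vpi$, and given $\mu_i$, the row $\{\xi_{ij}\}_j$ is i.i.d.\ $\sim \mu_i$.

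To convert this structure into a uniform representation, I would invoke the standard randomization lemma: for any Polish space $E$ there is a measurable map $\Phi: M_1(E) \times (0,1) \to E$ with $\Phi(m, U) \sim m$ when $U \sim U(0,1)$. Applying this with $E = M_1$ and $E = S$ yields $\Phi_1$ and $\Phi_2$, and representing the law of $\vpi$ produces a measurable $\Phi_0: (0,1) \to M_1(M_1)$ with $\Phi_0(\al) \overset{d}{=} \vpi$. Taking i.i.d.\ uniforms $\{\al, \be_i, \la_{ij}\}$ and setting $\mu'_i = \Phi_1(\Phi_0(\al), \be_i)$, $\xi'_{ij} = \Phi_2(\mu'_i, \la_{ij}) =: f(\al, \be_i, \la_{ij})$, the triple $(\Phi_0(\al), \{\mu'_i\}, \{\xi'_{ij}\})$ has the same hierarchical conditional distribution as $(\vpi, \{\mu_i\}, \{\xi_{ij}\})$, hence the same joint law, giving agreement of finite-dimensional distributions. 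The hard step is the randomization lemma itself: producing a jointly measurable $\Phi$ on $M_1(E) \times (0,1)$ requires a Borel isomorphism between $E$ and a Borel subset of $[0,1]$ together with an inverse-CDF construction, with joint measurability in the measure argument needing care. A secondary subtlety is the measurability of the pushforward map $\tau \mapsto \tau \circ \Psi^{-1}$, which follows from the measurability of $\Psi$ via the kernel characterization in Subsection 2.1.
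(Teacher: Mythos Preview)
Your approach is correct in spirit but takes a genuinely different route from the paper. The paper does not build the representation from scratch via de Finetti and randomization. Instead, it invokes the Aldous--Hoover theorem (Theorem \ref{T:AldousHoov}) for separately exchangeable arrays to obtain a four-variable representation $\xi_{ij} \overset{d}{=} g(\al,\be_i,\eta_j,\la_{ij})$, and then uses row exchangeability to show the column variable $\eta_j$ is superfluous: one permutes columns within each row so that the pairs $(\eta_{\tau_i(j)},\la_{i,\tau_i(j)})$ become a fully i.i.d.\ family over $(i,j)$, which can then be replaced by a single uniform via a binary-expansion split $\la_{ij}\mapsto(\ga_{ij},\ze_{ij})$. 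This is short and slick once Aldous--Hoover is available, but of course Aldous--Hoover is itself a deep result. Your argument, by contrast, uses only the one-dimensional de Finetti theorem (twice) plus the standard coding/randomization lemma, so it is more self-contained; it also produces the de Finetti structure of Theorem \ref{T:ex=>IDP} along the way, whereas the paper proves that separately and in fact uses the present theorem to establish the joint conditional independence in Theorem \ref{T:condOnBmu}.

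One point in your sketch deserves more care. You assert that the triple $(\vpi,\{\mu_i\},\{\xi_{ij}\})$ has the same ``hierarchical conditional distribution'' as your constructed triple, but what you have actually established is only that $\{\mu_i\}\mid\vpi$ are i.i.d.\ $\vpi$ and that, for each fixed $i$, $\{\xi_{ij}\}_j\mid\mu_i\sim\mu_i^\infty$. To match the construction you need the stronger statement that, conditionally on $(\vpi,\mu_1,\mu_2,\ldots)$, the rows are \emph{jointly} independent with $\xi_i\sim\mu_i^\infty$. This does follow from your setup---since the rows $\xi_i$ are conditionally i.i.d.\ given $\tau$ and each $\mu_i=\Psi(\xi_i)$ is a function of its own row, conditional independence across rows persists when you further condition on $(\mu_k)_k$, and within a row the a.s.\ exchangeability of $\tau$ gives $\cL(\xi_i\mid\tau,\mu_i)=\mu_i^\infty$---but you should make that step explicit, as it is exactly the content the paper isolates as Theorem \ref{T:condOnBmu}.
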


\begin{proof}
  The if direction is trivial. For the only if direction, suppose $\xi$ is row
  exchangeable. Note that row exchangeability implies separate exchangeability,
  so that by Theorem \ref{T:AldousHoov}, there exists a measurable function
  $g: \bR^4 \to S$ and an i.i.d.~collection of random variables $\{\al, \be_i,
  \eta_j, \la_{ij}: i, j \in \bN\}$, uniformly distributed on $(0, 1)$, such
  that the array $\{g(\al, \be_i, \eta_j, \la_{ij})\}$ has the same
  finite-dimensional distributions as $\xi$.

  For $n \in \bN$, define $d_n: (0, 1) \to \{0, 1\}$ by $d_n(x) = \flr{2^n x} -
  2 \flr{2^{n - 1} x}$, so that $d_n(x)$ is the $n$-th digit of the canonical
  binary expansion of $x$. Define $\ph: (0, 1) \to (0, 1)^2$ by
  \[
    \ph(x) = \left({
      \sum_{n = 1}^\infty 2^{-n} d_{2n - 1}(x),
      \sum_{n = 1}^\infty 2^{-n} d_{2n}(x)
    }\right).
  \]
  Note that $\ph$ is measurable, and that whenever $U$ is uniform on $(0, 1)$,
  it follows that $\ph_1(U)$ and $\ph_2(U)$ are independent and also uniform on
  $(0, 1)$.

  Define $f: \bR^3 \to S$ by $f(a, b, z) = g(a, b, \ph_1(z), \ph_2(z))$, and let
  $\ga_{ij} = \ph_1(\la_{ij})$ and $\ze_{ij} = \ph_2(\la_{ij})$, so that $f(\al,
  \be_i, \la_{ij}) = g(\al, \be_i, \ga_{ij}, \ze_{ij})$. It now suffices to show
  that $\{g(\al, \be_i, \ga_{ij}, \ze_{ij})\}$ and $\{g(\al, \be_i, \eta_j,
  \la_{ij})\}$ have the same finite-dimensional distributions.

  Fix $m, n \in \bN$ and let $A_{ij} \in \cS$ for $i \le m$ and $j \le n$. For
  each $i \in \bN$, choose a permutation $\tau_i$ that maps $\{1, \ldots ,n\}$
  to $\{in + 1, \ldots, in + n\}$. Since $\xi$ is row exchangeable, we have
  \begin{align*}
    P\left({
      \bigcap_{i = 1}^m \bigcap_{j = 1}^n
        \{g(\al, \be_i, \eta_j, \la_{ij}) \in A_{ij}\}
    }\right)
      &= P\left({
        \bigcap_{i = 1}^m \bigcap_{j = 1}^n \{\xi_{ij} \in A_{ij}\}
      }\right)\\
    &= P\left({
        \bigcap_{i = 1}^m \bigcap_{j = 1}^n \{\xi_{i, \tau_i(j)} \in A_{ij}\}
      }\right)\\
    &= P\left({
      \bigcap_{i = 1}^m \bigcap_{j = 1}^n
        \{g(\al, \be_i, \eta_{\tau_i(j)}, \la_{i,\tau_i(j)}) \in A_{ij}\}
    }\right)
  \end{align*}
  Since the mapping $(i, j) \mapsto \tau_i(j)$ from $\{1, \ldots, m\} \times 
  \{1, \ldots, n\}$ to $\bN$ is injective, it follows that $\{\al, \be_i, \eta_
  {\tau_i (j)}, \la_{i, \tau_i(j)}: i \le m, j \le n\}$ is an i.i.d.~collection
  of random variables. In particular, $\{\al, \be_i, \eta_{\tau_i(j)}, \la_{i,
  \tau_i(j)}: i \le m, j \le n\}$ and $\{\al, \be_i, \ga_{ij}, \ze_{ij}: i \le
  m, j \le n\}$ have the same distribution. Thus,
  \[
    P\left({
      \bigcap_{i = 1}^m\bigcap_{j = 1}^n
        \{g(\al, \be_i, \eta_{\tau_i(j)}, \la_{i, \tau_i(j)}) \in A_{ij}\}
    }\right)
      = P\left({
      \bigcap_{i = 1}^m \bigcap_{j = 1}^n
        \{g(\al, \be_i, \ga_{ij}, \ze_{ij}) \in A_{ij}\}
    }\right).
  \]
  Combined with the above, this shows that $\{g(\al, \be_i, \ga_{ij}, \ze_
  {ij})\}$ and $\{g(\al, \be_i, \eta_j, \la_{ij})\}$ have the same
  finite-dimensional distributions.
\end{proof}

\subsection{A de Finetti theorem for row exchangeability}

Theorem \ref{T:ex=>IDP} below is a de Finetti type theorem for row exchangeable
arrays. It shows that the law of a row exchangeable array, $\xi$, is governed by
its row distributions, $\mu_i$, and its row distribution generator, $\vpi$.

\begin{thm}\label{T:ex=>IDP}
  Suppose $\xi$ is a row exchangeable, infinite array of $S$-valued random
  variables. Then there exists an a.s.~unique random probability measure $\vpi$
  on $M_1$ and an a.s.~unique sequence $\mu=\{\mu_i: i \in \bN\}$ of random
  probability measures on $S$ such that ${\mu \mid \vpi} \sim \vpi^\infty$ and
  ${\xi_i \mid \mu_i} \sim \mu_i^\infty$ for all $i \in \bN$.
\end{thm}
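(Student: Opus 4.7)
The plan is to apply the standard exchangeable de Finetti theorem (\cite[Theorem~1.1]{Kallenberg2005}, as recalled in Section~\ref{S:emp-meas}) twice: first to each row individually, and then to the resulting sequence of row-distributions.

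\emph{Constructing the row distributions.} By condition (i) of row exchangeability, each row $\xi_i$ is an exchangeable $S^\infty$-valued sequence. Lemma~\ref{L:G=sig(mu)} therefore yields, for each $i$, a $\si(\xi_i)$-measurable random probability measure $\mu_i$ on $S$ satisfying $\xi_i \mid \mu_i \sim \mu_i^\infty$, with $\mu_i$ a.s.~unique. By Proposition~\ref{P:empToPrior}, $\mu_i$ is the a.s.~weak limit of the empirical measures $n^{-1}\sum_{j = 1}^n \de_{\xi_{ij}}$. Crucially, applying Lemma~\ref{L:cesaroMble} with $T = M_1$, $\rho = \bmpi$, $g(x) = \de_x$, and any $y_0 \in M_1$ gives a single measurable function $h := \la: S^\infty \to M_1$ for which $\mu_i = h(\xi_i)$ a.s., simultaneously for every $i \in \bN$.

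\emph{Passing to the row distribution generator.} Because every $\mu_i$ is the image of $\xi_i$ under the \emph{same} measurable map $h$, condition (ii) of row exchangeability immediately implies that $\{\mu_i\}$ is an exchangeable sequence of $M_1$-valued random variables. Since $S$ is complete and separable, so is $M_1$ under the Prohorov metric, and $\cM_1 = \cB(M_1)$. A second application of the de Finetti theorem, now to the exchangeable $M_1$-valued sequence $\mu = \{\mu_i\}$, produces an a.s.~unique random probability measure $\vpi$ on $M_1$ with $\mu \mid \vpi \sim \vpi^\infty$, completing the existence and uniqueness claims.

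The main subtlety is not the two invocations of de Finetti themselves, but the need for a \emph{universal} measurable selection of the directing random measure from a row: if $\mu_i$ were only known to equal some row-dependent function $h_i(\xi_i)$, exchangeability of $\{\xi_i\}$ would not automatically descend to $\{\mu_i\}$, and the second application of de Finetti would be unavailable. The Ces\`aro-limit construction in Lemma~\ref{L:cesaroMble}, together with Proposition~\ref{P:empToPrior}, is exactly what supplies this universal $h$.
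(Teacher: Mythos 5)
Your proposal is correct and follows essentially the same route as the paper: obtain each row distribution as $\mu_i = \la(\xi_i)$ for a single measurable $\la: S^\infty \to M_1$ (via Lemma~\ref{L:cesaroMble} and Proposition~\ref{P:empToPrior}), deduce exchangeability of $\{\mu_i\}$ from condition (ii), and apply the de Finetti machinery a second time on $M_1$. The subtlety you flag --- that $\la$ must not depend on $i$ --- is exactly the point the paper's proof makes as well.
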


\begin{proof}
  Fix $i \in \bN$. Since $\xi_i$ is exchangeable, Proposition \ref{P:empToPrior}
  implies that there exists an a.s.~unique random probability measure $\mu_i$ on
  $S$ such that, with probability one, $n^ {-1}\sum_{j = 1}^n \de_{\xi_{ij}} \to
  \mu_i$ weakly as $n \to \infty$, and ${\xi_i \mid \mu_i} \sim \mu_i^\infty$.

  According to the proof of Proposition \ref{P:empToPrior}, we may take $\mu_i =
  \la(\xi_i)$, where $\la: S^\infty \to M_1$ is measurable and does not depend
  on $i$. Let $\mu = \{\mu_i\}_{i = 1}^\infty$. Since $\mu_i = \la (\xi_i)$ and
  the sequence $\{\xi_i\}_{i = 1}^\infty$ is exchangeable, it follows that
  $\{\mu_i\}_{i = 1}^\infty$ is an exchangeable sequence of $M_1$-valued random
  variables.

  It now follows from Proposition \ref{P:empToPrior} that there exists
  an a.s.~unique random probability measure $\vpi$ on $M_1$ such that, with
  probability one, $n^{-1}\sum_{i = 1}^n \de_{\mu_i} \to \vpi$ weakly as $n \to
  \infty$, and ${\mu \mid \vpi} \sim \vpi^\infty$.
\end{proof}

The relation ${\xi_i \mid \mu_i} \sim \mu_i^\infty$ in Theorem \ref{T:ex=>IDP}
concerns only one fixed row at a time. Theorem \ref{T:condOnBmu} below
generalizes this to multiple rows. It states that $\vpi$ and $\bmX_{mn}$ are
independent given $\bmmu_m$, and gives an explicit expression for the law of
$\bmX_{mn}$ given $\bmmu_m$.

\begin{thm}\label{T:condOnBmu}
  If $\xi$ is row exchangeable, then
  \[
    \cL(\bmX_{mn} \mid \vpi, \bmmu_m) = \cL(\bmX_{mn} \mid \bmmu_m)
      = \prod_{i = 1}^m \mu_i^n \quad \text{a.s.,}
  \]
  for all $m, n \in \bN$.
\end{thm}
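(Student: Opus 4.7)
The plan is to reduce to the concrete Aldous--Hoover-style representation of Theorem \ref{T:rowExRepr} and then exploit the resulting independence structure via the tower property. Since $\mu_i = \la(\xi_i)$ for a fixed measurable $\la$ (from the proof of Proposition \ref{P:empToPrior}) and $\vpi$ is likewise a fixed measurable function of the array (from the proof of Theorem \ref{T:ex=>IDP}), the joint law of $(\bmX_{mn}, \bmmu_m, \vpi)$ is determined by the law of $\xi$. Thus we may replace $\xi$ by the array $\{f(\al, \be_i, \la_{ij})\}$ provided by Theorem \ref{T:rowExRepr} and assume throughout that $\xi_{ij} = f(\al, \be_i, \la_{ij})$, with $\{\al, \be_i, \la_{ij}\}$ an i.i.d.\ collection of uniform random variables.

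Let $U$ be an auxiliary uniform random variable, and set $\mu_i' := \cL(f(\al, \be_i, U) \mid \al, \be_i)$, a $\si(\al, \be_i)$-measurable random probability measure on $S$. Conditional on $(\al, \be_i)$, the variables $\xi_{ij} = f(\al, \be_i, \la_{ij})$, $j \in \bN$, are i.i.d.\ with distribution $\mu_i'$, so $\xi_i \mid (\al, \be_i) \sim (\mu_i')^\infty$, and applying the tower property to the sub-$\si$-algebra $\si(\mu_i') \subseteq \si(\al, \be_i)$ gives $\xi_i \mid \mu_i' \sim (\mu_i')^\infty$. The a.s.\ uniqueness assertion of Theorem \ref{T:ex=>IDP} then forces $\mu_i = \mu_i'$ a.s. An analogous argument applied to the exchangeable sequence $\{\mu_i\}$ identifies $\vpi$ with $\cL(\mu_1 \mid \al)$, which is $\si(\al)$-measurable. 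In particular, $\si(\vpi, \bmmu_m) \subseteq \si(\al, \be_1, \ldots, \be_m)$.

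Conditional on $\si(\al, \be_1, \ldots, \be_m)$, the rows $\xi_1, \ldots, \xi_m$ depend on disjoint blocks of the i.i.d.\ collection $\{\la_{ij}\}$ and are therefore conditionally independent, with $\xi_i \mid (\al, \be_1, \ldots, \be_m) \sim \mu_i^\infty$. Hence $\cL(\bmX_{mn} \mid \al, \be_1, \ldots, \be_m) = \prod_{i=1}^m \mu_i^n$ a.s. Since $\prod_i \mu_i^n$ is already $\bmmu_m$-measurable, the tower property for regular conditional distributions yields
\[
  \cL(\bmX_{mn} \mid \vpi, \bmmu_m)
    = E\left[{\prod_{i=1}^m \mu_i^n \;\middle|\; \vpi, \bmmu_m}\right]
    = \prod_{i=1}^m \mu_i^n \quad \text{a.s.},
\]
and the same argument with $\si(\bmmu_m)$ in place of $\si(\vpi, \bmmu_m)$ gives the remaining equality. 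The main obstacle is the initial reduction: Theorem \ref{T:rowExRepr} only provides equality of finite-dimensional distributions, so one must justify that the identity for the concrete representation transfers back to the original $\xi$; this rests on the observation that $\mu_i$ and $\vpi$ are fixed measurable functions of the array, so the joint distributions of $(\bmX_{mn}, \bmmu_m, \vpi)$ coincide under the two realizations.
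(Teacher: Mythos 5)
Your proof is correct and follows the same overall strategy as the paper: pass to the representation $\xi_{ij} = f(\al, \be_i, \la_{ij})$ from Theorem \ref{T:rowExRepr}, show that $\mu_i$ is (a.s.\ equal to) a $\si(\al,\be_i)$-measurable random measure, exploit the conditional independence of the rows given the $\al,\be$ variables, and tower down. Two local differences are worth noting. First, you identify $\mu_i$ with $\cL(f(\al,\be_i,U)\mid\al,\be_i)$ via the a.s.\ uniqueness of the directing measure, whereas the paper derives the same formula $\mu_i(B)=\int_0^1 1_B(f(\al,\be_i,x))\,dx$ from the conditional law of large numbers (Proposition \ref{P:cond-LLN}); your route is a legitimate shortcut that bypasses that proposition. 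Second, you condition on $\si(\al,\be_1,\ldots,\be_m)$ and therefore need the extra identification $\vpi = \cL(\mu_1\mid\al) \in \si(\al)$, while the paper conditions on $\si(\al,\be)$ with $\be$ the full sequence and only needs the cheaper observation that $\vpi\in\si(\mu)$. One technicality you gloss over, which the paper handles explicitly by replacing $\mu_i$ with a genuinely $\si(\al,\be_i)$-measurable version $\nu_i$: the equality $\mu_i=\mu_i'$ holds only a.s., so $\si(\vpi,\bmmu_m)$ is contained in $\si(\al,\be_1,\ldots,\be_m)$ only up to null sets; this does not break the tower argument, but it should be acknowledged. Your explicit justification of the transfer from the representing array back to $\xi$ (via the fact that $\mu$ and $\vpi$ are fixed measurable functions of the array, so the relevant joint laws coincide) is a point the paper passes over with ``we may assume,'' and it is correctly handled.
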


\begin{proof}
  Fix $m, n \in \bN$. For $i \le m$ and $j \le n$, let $A_{ij} \in \cS$.
  According to the proof of Proposition \ref{P:empToPrior}, we can write $\vpi$
  as a measurable function of $\mu$, so that $\vpi \in \si(\mu)$. Thus,
  \[
    \prod_{i = 1}^m \mu_i^n \in \si(\bmmu_m)
      \subseteq \si(\vpi, \bmmu_m)
      \subseteq \si(\vpi, \mu)
      = \si(\mu).
  \]
  It therefore suffices to show that
  \begin{equation}\label{condOnBmu}
    P\left(
      \bigcap_{i = 1}^m \bigcap_{j = 1}^n \{\xi_{ij} \in A_{ij}\}
        \;\middle|\; \mu
    \right) = \prod_{i = 1}^m \prod_{j = 1}^n \mu_i(A_{ij}) \quad \text{a.s.}
  \end{equation}
  By Theorem \ref{T:rowExRepr}, we may assume that $\xi_{ij} = f(\al,
  \be_i, \la_{ij})$, where $f: \bR^3 \to S$ is measurable and $\{\al, \be_i,
  \la_{ij}: i, j \in \bN\}$ is an i.i.d.~collection of random variables, uniform
  on $(0, 1)$.

  Fix $i \le m$ and $B \in \cS$. As noted in Section \ref{S:emp-meas},
  \cite[Proposition 1.4]{Kallenberg2005} implies that $n^{-1} \sum_{j = 1}^n
  \de_{\xi_{ij}}(B) \to \mu_i(B)$ a.s. On the other hand, $\de_{\xi_{ij}}(B) =
  h_B(\al, \be_i, \la_{ij})$, where $h_B = 1_B \circ f$. Since $h_B$ is bounded
  and $\{h_B(\al, \be_i, \la_{ij})\}_{j = 1}^\infty$ is conditionally
  i.i.d.~given $\al$ and $\be_i$, Proposition \ref{P:cond-LLN} implies
  \[
    \frac 1 n \sum_{j = 1}^n \de_{\xi_{ij}}(B)
      = \frac 1 n \sum_{j = 1}^n h_B(\al, \be_i, \la_{ij})
      \to E[h_B(\al, \be_i, \la_{i1}) \mid \al, \be_i]
      = \int_0^1 h_B(\al, \be_i, x) \, dx \quad \text{a.s.}
  \]
  Thus,
  \begin{equation}\label{mu_i-repr}
    \mu_i(B) = \int_0^1 h_B(\al, \be_i, x) \, dx \quad \text{a.s.,}
  \end{equation}
  which implies that the random variable $\mu_i(B)$ is measurable with respect
  to $\ol{\si(\al, \be_i)}$, the completion of $\si(\al, \be_i)$ with respect to
  $P$. Since this is true for every $B\in\cS$, it follows that $\mu_i \in 
  \ol{\si(\al, \be_i)}$. Thus, we may choose $\nu_i \in \si(\al, \be_i)$ such
  that $\mu_i = \nu_i$ a.s. Letting $\nu = \{\nu_i\}_{i = 1}^\infty$ and $\be = 
  \{\be_i\}_{i = 1}^\infty$, and noting that $\nu \in \si(\al, \be)$ and $\mu =
  \nu$ a.s., we have
  \begin{multline*}
    P\left(
      \bigcap_{i = 1}^m \bigcap_{j = 1}^n \{\xi_{ij} \in A_{ij}\}
        \;\middle|\; \mu
    \right)
      = P\left(
          \bigcap_{i = 1}^m \bigcap_{j = 1}^n \{\xi_{ij} \in A_{ij}\}
            \;\middle|\; \nu
        \right)\\
    = E\left[
        P\left(
          \bigcap_{i = 1}^m \bigcap_{j = 1}^n \{\xi_{ij} \in A_{ij}\}
            \;\middle|\; \al, \be
        \right)
          \;\middle|\; \nu
      \right]
    = E\left[
        P\left(
          \bigcap_{i=1}^m\bigcap_{j=1}^n \{\xi_{ij} \in A_{ij}\}
            \;\middle|\; \al, \be
        \right)
          \;\middle|\; \mu
      \right] \quad \text{a.s.}
  \end{multline*}
  Since $\xi_{ij} = f(\al, \be_i, \la_{ij})$, it follows that $\{\xi_{ij}\}$ is
  conditionally i.i.d.~given $\al, \be$. By Corollary \ref{C:G=sig(mu)},
  \[
    P\left(
      \bigcap_{i = 1}^m \bigcap_{j = 1}^n \{\xi_{ij} \in A_{ij}\}
        \;\middle|\; \al, \be
    \right) = \prod_{i = 1}^m\prod_{j = 1}^n
      P(\xi_{ij} \in A_{ij} \mid \al, \be) \quad \text{a.s.}
  \]
  By \eqref{mu_i-repr},
  \[
    P(\xi_{ij} \in A_{ij} \mid \al, \be)
      = E[h_{A_{ij}}(\al, \be_i, \la_{ij}) \mid \al, \be]
      = \int_0^1 h_{A_{ij}}(\al, \be_i, x) \, dx
      = \mu_i(A_{ij}) \quad \text{a.s.}
  \]
  Combining the last three displays gives
  \[
    P\left(
      \bigcap_{i = 1}^m \bigcap_{j = 1}^n \{\xi_{ij} \in A_{ij}\}
        \;\middle|\; \mu
    \right)
      = E\left[
          \prod_{i = 1}^m \prod_{j = 1}^n \mu_i(A_{ij})
            \;\middle|\; \mu
        \right]
      = \prod_{i = 1}^m \prod_{j = 1}^n \mu_i(A_{ij}) \quad \text{a.s.},
  \]
  establishing \eqref{condOnBmu} and finishing the proof.
\end{proof}

\subsection{Computing the posterior distribution}\label{S:comp-post}

In this section, we prove Theorem \ref{T:main}. The proof relies on Theorems
\ref{T:muSuffices} and \ref{T:MarkovProp} below. The former states that the
conditional law of the array $\{\xi_{ij}: i, j \in \bN\}$ is entirely determined
by the conditional law of the row distributions $\{\mu_i: i \in \bN\}$. The
latter gives a certain conditional independence property. Namely, the
conditional law of the rows above a given $m$ depends only on the row
distributions $\mu_1, \ldots, \mu_m$, and not on the observed data $\{\xi_{ij}:
i \le m, j \in \bN\}$.

\begin{thm}\label{T:muSuffices}
  Let $\xi$ be row exchangeable. Fix $M \in \bN$. For each $i \in \{1, \ldots,
  M\}$, let $N_i, O_i \in \bN$ with $N_i < O_i$, and let $A_{ij} \in \cS$ for
  $i \le M$ and $N_i < j \le O_i$. Let $\cG$ be a sub-$\si$-algebra of $\si
  (X_{1 N_1}, X_{2 N_2}, \ldots, X_{M N_M})$. Then
  \begin{equation}\label{muSuffices}
    P\left(
      \bigcap_{i = 1}^M \bigcap_{j = N_i + 1}^{O_i} \{\xi_{ij} \in A_{ij}\}
        \; \middle| \; \cG
    \right)
      = E\left[
        \prod_{i = 1}^M \prod_{j = N_i + 1}^{O_i} \mu_i(A_{ij})
          \; \middle| \; \cG
      \right].
  \end{equation}
\end{thm}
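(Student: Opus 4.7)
The plan is to condition on $\bmmu_M = (\mu_1, \ldots, \mu_M)$ via the tower property and then invoke the conditional independence supplied by Theorem \ref{T:condOnBmu}. Since each $\mu_i$ is a measurable function of the full row $\xi_i$ (via $\mu_i = \la(\xi_i)$) and $\cG \subseteq \si(X_{1N_1}, \ldots, X_{MN_M}) \subseteq \si(\xi_1, \ldots, \xi_M)$, both $\cG$ and $\bmmu_M$ lie in $\si(\xi_1, \ldots, \xi_M)$, so the tower property gives
\[
  P\Bigl(\bigcap_{i, j} \{\xi_{ij} \in A_{ij}\} \;\Big|\; \cG\Bigr)
  = E\Bigl[\,P\Bigl(\bigcap_{i, j} \{\xi_{ij} \in A_{ij}\} \;\Big|\; \cG, \bmmu_M\Bigr) \;\Big|\; \cG\,\Bigr],
\]
where I use $\bigcap_{i, j}$ as shorthand for $\bigcap_{i = 1}^M \bigcap_{j = N_i + 1}^{O_i}$. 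It therefore suffices to show that the inner conditional probability equals $\prod_{i, j} \mu_i(A_{ij})$ almost surely; the conclusion \eqref{muSuffices} then follows from pulling this $\bmmu_M$-measurable expression out and conditioning on $\cG$.

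To establish this, I would pick any $N \in \bN$ with $N \ge \max_{i \le M} O_i$. Theorem \ref{T:condOnBmu} then gives $\cL(\bmX_{MN} \mid \bmmu_M) = \prod_{i = 1}^M \mu_i^N$ a.s., so the coordinates $\{\xi_{ij} : i \le M, \, j \le N\}$ are conditionally independent given $\bmmu_M$, each with conditional distribution $\mu_i$. Partitioning the column range $\{1, \ldots, N\}$ for row $i$ into the "past" $\{j \le N_i\}$ and the "query" $\{N_i < j \le O_i\}$ (with any remaining columns treated as filler), this factorization shows in particular that the collection $\{\xi_{ij} : i \le M, \, N_i < j \le O_i\}$ is conditionally independent of $\si(X_{1N_1}, \ldots, X_{MN_M})$ given $\bmmu_M$. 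Since $\cG$ is contained in this past $\si$-algebra, a standard conditional independence lemma lets me drop $\cG$ from the inner conditioning, and then applying the product formula once more to just the query coordinates yields
\[
  P\Bigl(\bigcap_{i, j} \{\xi_{ij} \in A_{ij}\} \;\Big|\; \cG, \bmmu_M\Bigr)
  = P\Bigl(\bigcap_{i, j} \{\xi_{ij} \in A_{ij}\} \;\Big|\; \bmmu_M\Bigr)
  = \prod_{i = 1}^M \prod_{j = N_i + 1}^{O_i} \mu_i(A_{ij}) \quad \text{a.s.}
\]

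The step I expect to require the most care is the one that drops $\cG$ from the conditioning, namely the principle that if $X$ and $Y$ are conditionally independent given $Z$ and $\cG \subseteq \si(X)$, then $\cL(Y \mid \cG, Z) = \cL(Y \mid Z)$. This is standard, but because $\cG$ is allowed to be an \emph{arbitrary} sub-$\si$-algebra of $\si(X_{1N_1}, \ldots, X_{MN_M})$ rather than one generated by explicit random variables, it must be invoked at the level of $\si$-algebras (e.g., via the conditional independence calculus in Kallenberg). Everything else is a direct application of Theorem \ref{T:condOnBmu} together with the tower property.
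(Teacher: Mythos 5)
Your proof is correct, but it takes a genuinely different route from the paper's. The paper verifies \eqref{muSuffices} directly from the defining property of conditional expectation: it fixes a generating $\pi$-system of sets $A = \bigcap_{i}\bigcap_{j \le N_i}\{\xi_{ij} \in A_{ij}\}$, shows via Theorem \ref{T:condOnBmu} that both sides of \eqref{muSuffices} integrate to $E[\prod_{i}\prod_{j \le O_i}\mu_i(A_{ij})]$ over each such $A$, and concludes by the $\pi$-$\la$ theorem; it never forms the conditional expectation given $(\cG, \bmmu_M)$ and never explicitly isolates a conditional independence statement. You instead prove the stronger intermediate claim that $P(\bigcap_{i,j}\{\xi_{ij}\in A_{ij}\} \mid \cG, \bmmu_M) = \prod_{i,j}\mu_i(A_{ij})$ by extracting, from the product form of $\cL(\bmX_{MN}\mid \bmmu_M)$, the conditional independence of the query block from the observed block given $\bmmu_M$, and then tower down to $\cG$. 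Your version buys a conceptually sharper statement (the observed data is irrelevant to the future once the row distributions are known, which is the same Markov-type phenomenon the paper later records as Theorem \ref{T:MarkovProp}), at the cost of two technical steps you correctly flag: upgrading the rectangle factorization to conditional independence of the generated $\si$-algebras (a monotone class argument), and the $\si$-algebra-level lemma that conditional independence lets you drop $\cG \subseteq \si(X_{1N_1},\ldots,X_{MN_M})$ from the conditioning. The paper's computation is more elementary and self-contained, needing only Theorem \ref{T:condOnBmu} and the $\pi$-$\la$ theorem; both arguments are sound.
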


\begin{proof}
  Let $A \in \cG$ have the form $A = \bigcap_{i = 1}^M \bigcap_{j = 1}^{N_i}
  \{\xi_{ij} \in A_{ij}\}$. Let $Z$ denote the left-hand side of
  \eqref{muSuffices}. Then
  \begin{multline*}
    E[Z 1_A] = E\left[ P\left(
          A \cap \bigcap_{i = 1}^M \bigcap_{j = N_i + 1}^{O_i} {
            \{\xi_{ij} \in A_{ij}\}
          } \; \middle| \; \cG
        \right) \right]
      = P\left(
          \bigcap_{i = 1}^M \bigcap_{j = 1}^{O_i} \{\xi_{ij} \in A_{ij}\}
        \right)\\
    = E\left[ P\left(
          \bigcap_{i = 1}^M \bigcap_{j = 1}^{O_i} \{\xi_{ij} \in A_{ij}\}
            \; \middle| \; \bmmu_M
        \right) \right]
      = E\left[ \prod_{i = 1}^M \prod_{j = 1}^{O_i} \mu_i(A_{ij}) \right],
  \end{multline*}
  by Theorem \ref{T:condOnBmu}. On the other hand,
  \begin{multline*}
    E\left[ \left(
          \prod_{i = 1}^M \prod_{j = N_i + 1}^{O_i} \mu_i(A_{ij})
        \right) 1_A \right]
      = E\left[ E\left[
          \left(
            \prod_{i = 1}^M \prod_{j = N_i + 1}^{O_i} \mu_i(A_{ij})
          \right) 1_A
            \; \middle| \; \bmmu_M
        \right] \right]\\
    = E\left[ \left(
          \prod_{i = 1}^M \prod_{j = N_i + 1}^{O_i} \mu_i(A_{ij})
        \right) P(A \mid \bmmu_M) \right]
      = E\left[ \prod_{i = 1}^M \prod_{j = 1}^{O_i} \mu_i(A_{ij}) \right].
  \end{multline*}
  Hence, the two displays are equal. By the $\pi$-$\la$ theorem, they are equal
  for all $A \in \cG$, and this proves the claim.
\end{proof}

\begin{thm}\label{T:MarkovProp}
  Let $\xi$ be row exchangeable. Fix $m, M, N \in \bN$ with $m < M$. Then
  $\bmX_{mN}$ and $\{ (\mu_j, X_{jN})\}_{j = m + 1}^M$ are conditionally
  independent given $\bmmu_m$.
\end{thm}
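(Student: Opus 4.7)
The plan is to verify the defining identity of conditional independence, namely
\[
  P(E_1 \cap E_2 \mid \bmmu_m) = P(E_1 \mid \bmmu_m) \cdot P(E_2 \mid \bmmu_m) \quad \text{a.s.},
\]
on $\pi$-systems that generate $\si(\bmX_{mN})$ and $\si(\{(\mu_j, X_{jN})\}_{j = m + 1}^M)$, and then invoke the $\pi$-$\la$ theorem to extend. For the first, I will use measurable rectangles $E_1 = \bigcap_{i = 1}^m \bigcap_{j = 1}^N \{\xi_{ij} \in A_{ij}\}$; for the second, products $E_2 = E_2^\mu \cap E_2^\xi$, where $E_2^\mu = \bigcap_{i = m + 1}^M \{\mu_i \in C_i\}$ and $E_2^\xi = \bigcap_{i = m + 1}^M \bigcap_{j = 1}^N \{\xi_{ij} \in A_{ij}\}$, with $A_{ij} \in \cS$ and $C_i \in \cM_1$. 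These rectangles are $\pi$-systems since $S^{mN}$ and $(M_1 \times S^N)^{M - m}$ carry product structure.

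The key computation uses Theorem \ref{T:condOnBmu} applied with $M$ rows and $N$ columns. Since $E_2^\mu$ is $\bmmu_M$-measurable, pulling it out and applying the theorem yields
\[
  P(E_1 \cap E_2 \mid \bmmu_M)
    = 1_{E_2^\mu} \cdot \prod_{i = 1}^M \prod_{j = 1}^N \mu_i(A_{ij})
    = F \cdot G \quad \text{a.s.},
\]
where I set $F = \prod_{i = 1}^m \prod_{j = 1}^N \mu_i(A_{ij})$ and $G = 1_{E_2^\mu} \prod_{i = m + 1}^M \prod_{j = 1}^N \mu_i(A_{ij})$. Taking $E_2 = \Om$ and $E_1 = \Om$ separately in the same computation gives $P(E_1 \mid \bmmu_M) = F$ and $P(E_2 \mid \bmmu_M) = G$.

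Because $F$ depends only on $\mu_1, \ldots, \mu_m$, it is $\bmmu_m$-measurable. The tower property then gives $P(E_1 \mid \bmmu_m) = F$ and $P(E_2 \mid \bmmu_m) = E[G \mid \bmmu_m]$. Taking $E[\cdot \mid \bmmu_m]$ of the previous display and pulling out the $\bmmu_m$-measurable factor $F$ yields
\[
  P(E_1 \cap E_2 \mid \bmmu_m)
    = F \cdot E[G \mid \bmmu_m]
    = P(E_1 \mid \bmmu_m) \cdot P(E_2 \mid \bmmu_m),
\]
as required on the rectangles.

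I expect the bulk of the effort to be bookkeeping rather than a deep obstacle. The only mildly delicate step is extending from the rectangular $\pi$-systems to the full $\si$-algebras; the cleanest way is to recast the identity as $E[Y 1_{E_1} 1_{E_2}] = E[Y P(E_1 \mid \bmmu_m) P(E_2 \mid \bmmu_m)]$ for every bounded $\bmmu_m$-measurable $Y$, verify it on the rectangles via the calculation above, and then apply $\pi$-$\la$ twice (once in $E_1$, once in $E_2$) to promote to arbitrary events.
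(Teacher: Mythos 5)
Your proposal is correct and follows essentially the same route as the paper: both compute the joint conditional probability of the product events given $\bmmu_M$ via Theorem \ref{T:condOnBmu}, condition down to $\bmmu_m$ by the tower property, factor out the $\bmmu_m$-measurable piece $\prod_{i \le m}\mu_i^N(A_i)$, and identify the two factors as the separate conditional probabilities. The only difference is that you spell out the $\pi$-$\la$ extension that the paper leaves implicit.
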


\begin{proof}
  Let $A_{ij} \in \cS$ for $i \le M$ and $j \le N$, and let $B_i \in \cM_1$ for
  $m < i \le M$. We adopt the shorthand $A_i = A_{i1} \times \cdots \times A_
  {iN}$. By Theorem \ref{T:condOnBmu},
  \begin{multline*}
    P(
      \mu_{m + 1} \in B_{m + 1}, \ldots, \mu_M \in B_M,
      X_{1N} \in A_1, \ldots, X_{MN} \in A_M
      \mid \bmmu_M
    )\\
    = \prod_{i = m + 1}^M 1_{B_i}(\mu_i) \prod_{i = 1}^M \mu_i^N(A_i).
  \end{multline*}
  Hence,
  \begin{multline*}
    P(
      \mu_{m + 1} \in B_{m + 1}, \ldots, \mu_M \in B_M,
      X_{1N} \in A_1, \ldots, X_{MN} \in A_M
    \mid \bmmu_m)\\
    = E\bigg[
      \prod_{i = m + 1}^M 1_{B_i}(\mu_i) \prod_{i = 1}^M \mu_i^N(A_i)
    \;\bigg|\; \bmmu_m \bigg]
    = \bigg(\prod_{i = 1}^m \mu_i^N(A_i)\bigg) E\bigg[
      \prod_{i = m + 1}^M 1_{B_i}(\mu_i) \mu_i^N(A_i)
    \;\bigg|\; \bmmu_m \bigg].
  \end{multline*}
  Taking $A_{ij} = S$ for all $i \le m$ gives
  \begin{multline*}
    P(
      \mu_{m + 1} \in B_{m + 1}, \ldots, \mu_M \in B_M,
      X_{m + 1, N} \in A_{m + 1}, \ldots, X_{MN} \in A_M
    \mid \bmmu_m)\\
    = E\bigg[
      \prod_{i = m + 1}^M 1_{B_i}(\mu_i) \mu_i^N(A_i)
    \;\bigg|\; \bmmu_m \bigg].
  \end{multline*}
  Since $P(X_{1N} \in A_1, \ldots, X_{mN} \in A_m \mid \bmmu_m) = \prod_{i = 1}^m
  \mu_i^N(A_i)$, this completes the proof.
\end{proof}

\begin{proof}[Proof of Theorem \ref{T:main}]
  According to Theorem \ref{T:muSuffices} with $N_i = N$, $O_i = N'$, and $\cG =
  \si(\bmX_{MN})$, we can compute the posterior distribution \eqref{main-goal},
  provided we can compute $\cL(\bmmu_M \mid \bmX_{MN})$. Note that
  \[
    \cL(\bmmu_M \mid \bmX_{MN}; d\nu)
      = \cL(
        \mu_2, \ldots, \mu_M \mid \bmX_{MN}, \mu_1 = \nu_1;
        d\nu_2 \cdots d\nu_M
      ) \, \cL(\mu_1 \mid \bmX_{MN}; d\nu_1).
  \]
  Iterating this, we see that we can determine $\cL(\bmmu_M \mid \bmX_{MN})$ if
  we know
  \begin{equation}\label{main-goal2}
    \cL(\mu_m \mid \bmX_{MN}, \bmmu_{m - 1}), \quad 1 \le m \le M.
  \end{equation}
  Theorem \ref{T:MarkovProp} shows that in \eqref{main-goal2}, the first $m - 1$
  rows of $\bmX_{MN}$ can be omitted. Hence, the conditional distribution
  \eqref{main-goal} is entirely determined by the distributions given in
  \eqref{main-goal3}.
\end{proof}

\bibliographystyle{plain}
\bibliography{idp-paper}
\addcontentsline{toc}{section}{References}

\end{document}